\begin{document}

\numberwithin{equation}{section}

\newtheorem{thm}[subsection]{Theorem}
\newtheorem{lem}[subsection]{Lemma}
\newtheorem{cor}[subsection]{Corollary}
\newtheorem{prop}[subsection]{Proposition}
\newtheorem{obs}[subsection]{Observation}
\newtheorem{claim}[subsection]{Claim}
\newtheorem{con}[subsection]{Conjecture}
\newtheorem{Q}[subsection]{Question}

\theoremstyle{definition}
\newtheorem{definition}[subsection]{Definition}
\newtheorem{rem}[subsection]{Remark}
\newtheorem{eg}[subsection]{Example}
\newtheorem{egs}[subsection]{Examples}

\newcommand{\R}{\mathbf{R}}
\newcommand{\C}{\mathbf{C}}
\newcommand{\US}{\mathbf{S}}
\newcommand{\Z}{\mathbf{Z}}
\newcommand{\N}{\mathbf{N}}

\def\a{\alpha}
\def\b{\beta}
\def\g{\gamma}
\def\G{\Gamma}
\def\lam{\lambda}
\def\th{\theta}
\def\Th{\Theta}
\def\eps{\varepsilon}
\def\d{\delta}
\def\i{\mathrm{i}}
\def\k{\kappa}
\def\w{\omega}
\def\z{\zeta}

\def\half{\frac{1}{2}}
\def\nv#1{\mathbf{#1}}
\def\bd{\partial}
\def\co{\colon}
\def\of{\circ}
\def\dist{\mathrm{d}}
\def\l{\left}
\def\r{\right}

\def\fib{\hfill\break}
\def\ds{\displaystyle}
\def\bitem#1{\item\textbf{#1}}
\def\eob{\hfill\qedsymbol}

\def\and{\quad\text{and}\quad}
\def\givenby{\quad\text{given by}\quad}
\def\Pf#1{\noindent{\footnotesize{\bf Proof.} #1\hfill//$\,$}}

\def\fe{\texttt{8}}
\def\cyl{\mathrm{Cyl}}
\def\cx{\textsc{cx}}
\def\stack{\mathcal{T}}
\def\c{\mu}
\def\slab{\mathcal{S}}
\def\ph{\phantom}
\def\:{\colon}

\def\tit{\textit}
\def\tbt{\textbf}

\long\def\symbolfootnote[#1]#2{
	\begingroup
	\def\thefootnote{\fnsymbol{footnote}}\footnote[#1]{#2}
	\endgroup}
	

\catcode`@=11 \@mparswitchfalse  

\newcounter{mnotecount}[page]
\renewcommand{\themnotecount}{\arabic{mnotecount}}

\newcommand{\mnote}[1]
{\protect{\stepcounter{mnotecount}}$^{\mbox{\footnotesize  $
      \bullet$\themnotecount}}$
\marginpar{\null\hskip-9pt\raggedright\tiny\em
\themnotecount:\! \it #1} }


\parskip 5pt
\parindent 0pt
\baselineskip 16pt


\author{Bruce Solomon}


\begin{abstract}
	We prove: \tit{If a complete connected $\,C^{2}\,$ surface $\,M\,$ in $\,\R^{3}$ has 
	general position, intersects some plane along a clean figure-8 (a loop with total 
	curvature zero) and all compact intersections with planes have central symmetry, then
	$\,M\,$ is a (geometric) cylinder over some central figure-8}. On the way, we 
	establish interesting facts about centrally symmetric loops in the plane; 
	for instance, a clean loop with even rotation number $\,2k\,$ can never be central 
	unless it passes through its center exactly twice and $\,k=0\,$.
\end{abstract}

\title[\dots\ figure-8 cross-cuts\ \dots]{Central figure-8 cross-cuts make surfaces cylindrical}
\maketitle

\thispagestyle{empty}

\vskip 10pt

\begin{center}
	\hyperref[sec:intro]{{\S1}}$\,\ \diamond\ $
	\hyperref[sec:RS]{{\S2}}$\,\ \diamond\,\ $ 
	\hyperref[sec:MR]{{\S3}}$\,\ \diamond\,\ $ 
	\hyperref[sec:bib]{{Ref.}}
\end{center}
\vskip8pt

\section{Introduction}\label{sec:intro}
A set $\,X\subset\R^{n+1}\,$ has a \tit{center} $\,c\in\R^{n+1}$ (or has \tit{central symmetry},
or \tit{is central}) if the $c$-fixing reflection $\,x\mapsto 2 c-x\,$ maps $\,X\,$ 
to itself.

\tit{What can one say about a set $\,X\subset\R^{n+1}$ that meets every hyperplane 
along a central set?}

When $\,P\,$ is a hyperplane, we (for now) call $\,X\cap P\,$ a \tit{cross cut} of $\,X\,$.
Later we define cross-cut more narrowly.

\tit{Are cross-cuts of central sets always central?} Not generally, unless they go
through the center. A cube in $\,\R^{3}$ is central, for instance, but a plane that severs 
its corner cuts it along a triangle, which is never central.

\tit{Do central cross-cuts make a set central?} Not in $\,\R^{2}$. For instance, 
all cross-cuts of a plane triangle are (trivially) central, but again, 
no triangle is central. 

When $\,n+1>2$, however, we know of no such counterexample. Indeed, in the presence of, say, 
convexity, central cross-cuts can force \tit{more} than just centrality. 

For example,  when $\,K\subset\R^{n+1}$ is a convex body and $\,n+1>2$, 
central cross-cuts force $\,K\,$ to be ellipsoidal. 
The most general formulation of this fact was proven by S.~P.~Olovjanischnikoff 
\cite{olov},\footnote{G.~R.~Burton gives a nice statement of Olovjanischnikoff's result in 
\cite[Lemma 3]{bu}.} who relaxed restrictions (e.g., on smoothness) in earlier results of 
this type by Brunn and Blaschke (see \cite[\S44, \S84]{VuD} and \cite{wb}).

In \cite{sor}, we drew a similar conclusion for (not necessarily convex) hypersurfaces
of revolution in $\,\R^{n+1}$. If their compact convex cross-cuts are central, they must be 
quadric: ellipsoids, hyperboloids, paraboloids, or circular cylinders. 

We later used that fact in \cite{cpo} to get a broader result: 
when a complete immersed surface in $\,\R^{3}\,$ has a connected compact transverse cross-cut, 
and all cross-cuts of that type are central, uniformly convex \tit{ovals}, the surface is 
either a \hyperlink{tgt:ccyl}{central cylinder} or a \hyperlink{tgt:quad}{tubular quadric} 

None of results, however, manages to exploit centrality of cross-cuts
without also requiring their convexity. Here for the first time, we drop the convexity 
requirement, replacing it with an admittedly special but very different alternative. 
We consider surfaces in $\,\R^{3}\,$ whose cross-cuts are \tit{clean} (meaning they
never visit a point twice tangent to the same line) and have total geodesic curvature 
zero, making them \tit{figure-8's} up to regular homotopy. Our main result, 
\hyperref[thm:8case]{Theorem \ref{thm:8case}} says that a surface with this property 
must be a central cylinder. \hyperref[sec:MR] {Section \ref{sec:MR}} of our paper focuses
on the proof of that fact.

\hyperref[sec:RS]{Section \ref{sec:RS}} (which we find interesting on its own) is devoted 
mainly the proof of a simple but critical ingredient: \tit{Any clean, 
central figure-8 must visit its center exactly twice}. The key role this plays in 
\hyperref[sec:MR]{Section \ref{sec:MR}} is explained in the paragraphs
immediately below the statement of \hyperref[thm:8case]{Theorem \ref{thm:8case}}.
In proving the supporting fact, however, we get the general theory summarized in 
\hyperref[prop:oo0]{Proposition \ref{prop:oo0}}, which says, in part, that a clean 
central loop must either have \tit{odd} rotation index, and avoid its center entirely, 
or else be a figure-8  (index {zero}) that visits its center exactly twice. 

Simple examples---the unit circle traced twice, for instance, or the loop in 
\hyperref[fig:unclean]{Figure \ref{fig:unclean}}---show that such statements fail for loops 
that are not cleanly immersed. The reasoning in both \hyperref[sec:RS]{\S\ref{sec:RS}} and 
\hyperref[sec:MR]{\S\ref{sec:MR}} would simplify considerably if we didn't need to assume 
and exploit general position arguments to exclude unlikely ``pathologies'' like these.

\section{Reparametrization and symmetry}\label{sec:RS}

\begin{definition}[Central symmetry]\label{def:central}
	 An \tit{immersion} $\,F\colon M\to\R^{n+1}\,$ is central when its image has central symmetry.
\end{definition}

\begin{definition}[Reparametrization]
	When $\,\a,\b\:\US^{1}\to\R^{2}$ are immersed loops, we say that $\,\b\,$
	\tit{repara\-me\-trizes} $\,\a\,$ when $\,\b = \a\of\phi\,$ for some diffeomorphism 
	$\,\phi\co\US^{1}\to\US^{1}$. It \tit{preserves} or \tit{reverses} orientation 
	when $\,\phi\,$ preserves or reverses the orientation of the circle.\eob
\end{definition}

The following fact will pester us:
\tit{Two immersed loops with the same image don't always repara\-metrize each other, 
even if they visit each point equally often.} Centrality doesn't mitigate this inconvenient
truth, as discussed with regard to \hyperref[fig:unclean]{Figure 
\ref{fig:unclean}} below.

\begin{egs}\label{eg:unclean}
	The unit circle $\,\US^{1}\subset\C\,$ is central about the origin. If we parametrize 
	it as usual by $\,t\mapsto e^{\i\,t}$, reflection through the origin produces the 
	orientation-\tit{preserving} repara\-me\-trization $\,t\mapsto -e^{\i t}$.
	
	Contrastingly, consider the figure-8 parametrized by $\,t\mapsto(\cos t,\sin2t)$. 
	While likewise central about the origin, reflection through the origin induces 
	the orientation-\tit{reversing} reparametrization $\,t\mapsto(\cos t,-\sin 2t)$.
	(\hyperref[fig:egO8]{Figure \ref{fig:egO8}}).
	
	\begin{figure}[h]
		\begin{center}
			\includegraphics[height=40mm]{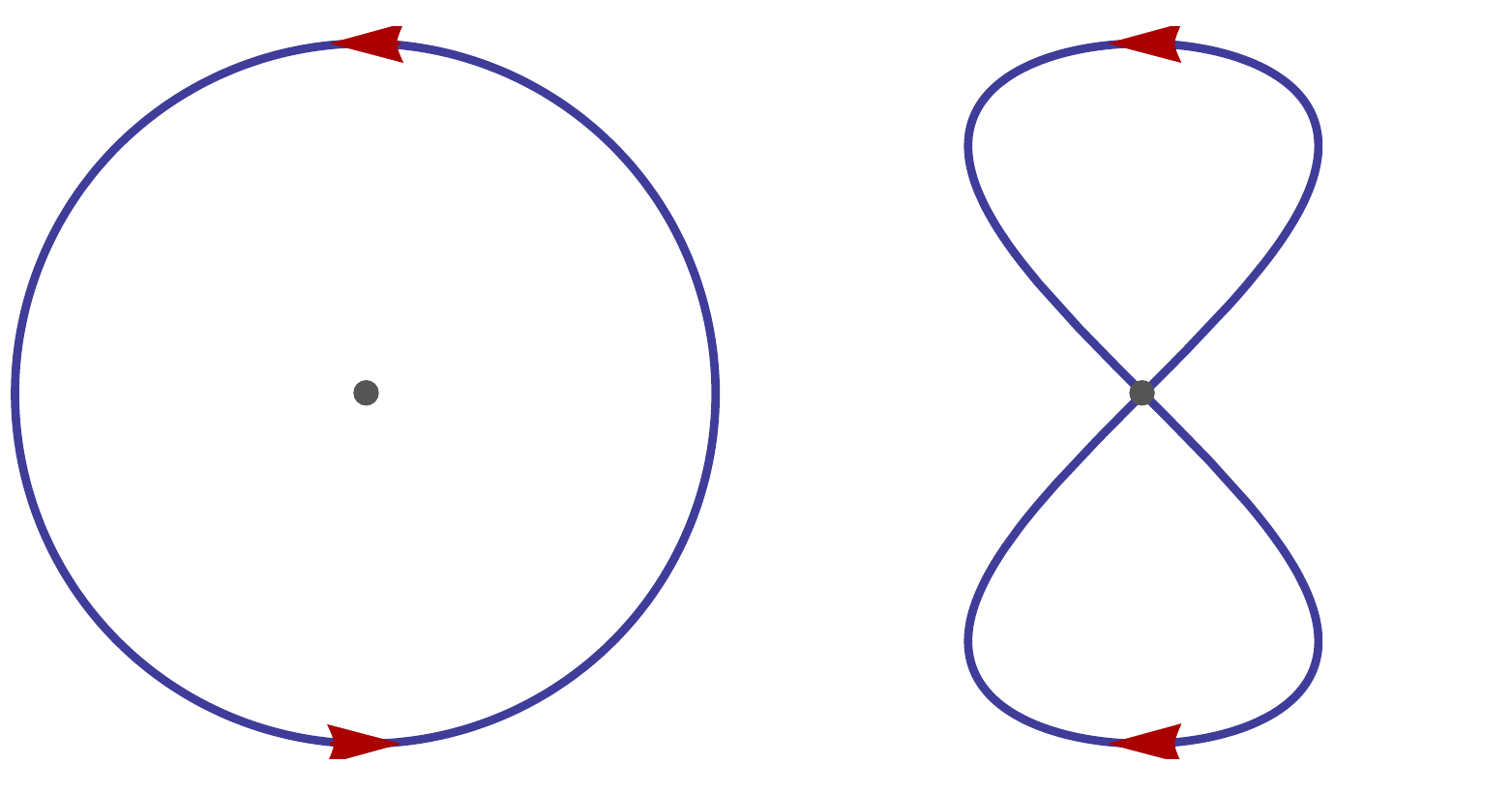}
			\caption{Reflection through the origin reparametrizes both the unit circle and 
			figure-8. While pre\-serving orientation on the circle, however, it \tit{reverses} 
			orientation on the figure-8.}
			\label{fig:egO8}
		\end{center}
	\end{figure}	
	
	In \hyperref[fig:unclean]{Figure \ref{fig:unclean}} however, we depict a smooth, 
	origin-central immersion $\,\a\colon \US^{1}\to\R^{2}$ whose 
	reflection $\,-\a\,$ does \tit{not} reparametrize $\,\a$, even though $\,\a\,$ and 
	$\,-\a\,$ have the same image. To see this, orient the open sets $\,U_{L},\, U_{0}$ 
	and $U_{R}\,$ there in the standard way, and observe that\smallskip
	\begin{align*}
		\a&=\bd(U_{0}+U_{R}-U_{L})\\
		-\a&=\bd(U_{0}-U_{R}+U_{L})
	\end{align*}
	As the oriented domains bounded by $\,\a\,$ and $\,-\a\,$ are neither equal nor 
	opposite, $\,-\a\,$ neither preserves, nor reverses the orientation of $\,\a$. 
	It evidently does not, therefore, reparametrize.
\eob
\end{egs}

	\begin{figure}[h]
		\begin{center}
			\includegraphics[height=25mm]{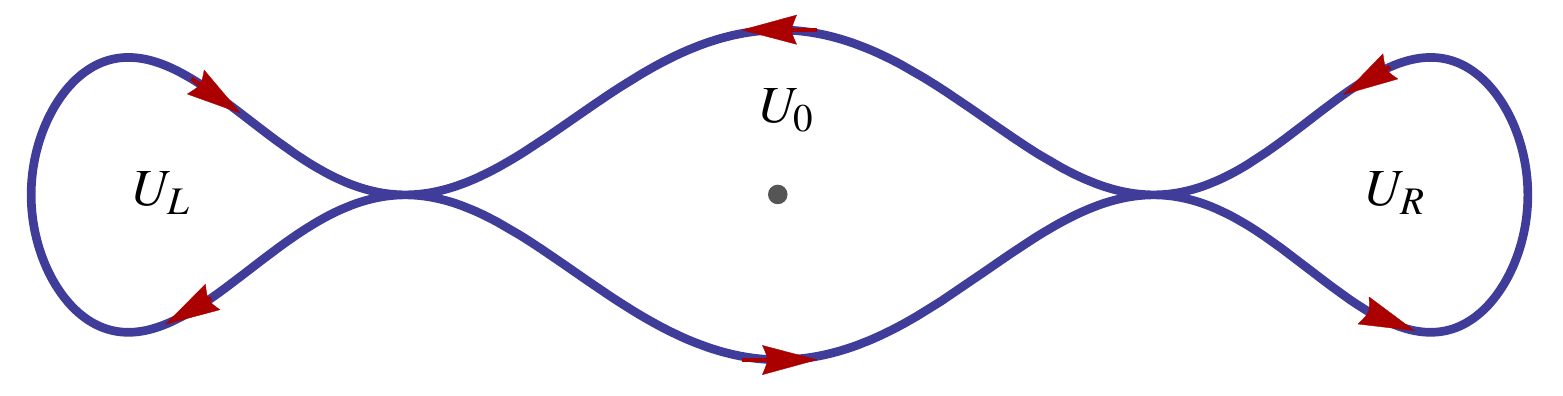}
			\caption{This immersed loop $\,\a\,$
			winds counter-clockwise around $\,U_{R}\,$ and $\,U_{0}$, but clockwise around 
			$\,U_{L}\,$. Reflection through the center preserves its image, but 
			$\,-\a\,$  is not a reparametrization of $\,\a$.}
			\label{fig:unclean}
		\end{center}
	\end{figure}

We can exclude behavior like that depicted in \hyperref[fig:unclean]{Figure 
\ref{fig:unclean}} by requiring our loops to be \tit{cleanly immersed}: 

\begin{definition}[Double-points/clean loops]\label{def:cdp}
	A point $\,p\,$ in the image of an immersed curve $\,\a\,$ is a \tit{double-point} 
	when its preimage contains more than one point. When it contains {exactly} \tit{two}, 
	we call it a \tit{simple} double-point.

	An immersion $\,\a\colon \US^{1}\to\R^{2}$ has \tit{clean} double-points (or 
	\tit{is clean}) if, whenever $\,t_{1},t_{2}\in\US^{1}$ are distinct preimages of a 
	single point in $\,\R^{2}$, they have respective neighborhoods $\,U_{1}\,$ and 
	$\,U_{2}\,$ whose images $\,\a(U_{1})\,$ and $\,\a(U_{2})\,$ intersect transversally.
\eob
\end{definition}

\begin{rem}
	Though more familiar, a \tit{general position} assumption (like the one we use at the 
	start of \hyperref[sec:MR]{\S\ref{sec:MR}} below) would be more restrictive than that of
	\tit{clean double-points} for loops in $\,\R^{2}$. The latter lets a loop pass 
	through a single point three or more times as long as no two velocity vectors there 
	are collinear. General position would prohibit triple intersections.
\eob
\end{rem}

The double-points in \hyperref[eg:unclean]{Example \ref{eg:unclean}} are obviously 
\tit{not} clean, and we shall see that when two loops with the same image do \tit{not} 
reparametrize each other, they \tit{must} have unclean double-points. Indeed, the 
principal result of this section, \hyperref[prop:repo]{Proposition \ref{prop:repo}}, and 
the main facts leading up to it all fail without cleanness, as consideration of 
\hyperref[fig:unclean]{Figure \ref{fig:unclean}} quickly reveals.

We will denote the intrinsic distance between points $\,s_{1},s_{2}\,$  in the sphere
of any dimension (here the circle) by $\,\phi(s_{1},s_{2})$. We write $\,\k_{g}\,$ for
the \tit{geodesic} (or \tit{signed}) \tit{curvature} along a loop $\,\a\:\US^{1}\to\R^{2}$.
It is given by
\begin{equation}\label{eqn:kg}
	\k_{g}(t):=\frac{\det(\a',\a'')}{\l|\a'\r|^{3}}
\end{equation}
\medskip

\begin{obs}\label{obs:kbar}
	Suppose we have a $\,C^{2}\,$ unit-speed loop $\,\a\colon \US^{1}\to\R^{2}$. If 
	$\,\bar\k:=\max_{\US^{1}}|\k_{g}|$, and $\,\a^{-1}(p)\,$ contains distinct inputs 
	$\,s_{1},s_{2}\in\US^{1}$ for some $\,p\in\R^{2}$, then 
	$\,\phi(s_{1},s_{2})\ge\pi/\bar\k$.
\end{obs}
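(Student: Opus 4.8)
The plan is to translate the closing-up of the arc between $\,s_1\,$ and $\,s_2\,$ into a statement about its turning angle, and then exploit the elementary fact that an arc returning to its start cannot keep its unit tangent confined to an open half-plane of directions. The curvature bound $\,\bar\k\,$ limits how fast that tangent can turn, so a short arc would be forced into exactly such a half-plane and could not close up.

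First I would reduce to arc length. Since $\,\a\,$ is unit-speed, the shorter of the two arcs of $\,\US^{1}\,$ joining $\,s_1\,$ to $\,s_2\,$ has length exactly $\,L:=\phi(s_1,s_2)$; restricting $\,\a\,$ to it (and shifting the parameter to start at $0$) yields a unit-speed path $\,\g\colon[0,L]\to\R^{2}\,$ with $\,\g(0)=\g(L)=p$. Writing $\,\g'(t)=(\cos\th(t),\sin\th(t))\,$ for a continuous turning angle $\,\th$, formula \eqref{eqn:kg} reduces to $\,\th'=\k_g$, so $\,|\th'|\le\bar\k\,$ everywhere. The coincidence $\,\g(0)=\g(L)\,$ becomes the vanishing integral $\,\int_0^L\g'(t)\,dt=\g(L)-\g(0)=\mathbf{0}$.

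Now I would argue by contradiction, assuming $\,L<\pi/\bar\k$. The crux is the choice of reference direction: fix the unit vector $\,\mathbf{u}:=\g'(L/2)$, the tangent at the \emph{midpoint} of the arc. For every $\,t\in[0,L]\,$ we then have $\,|\th(t)-\th(L/2)|=\bigl|\int_{L/2}^{t}\k_g\bigr|\le\bar\k\,|t-L/2|\le\bar\k L/2<\pi/2$, whence $\,\g'(t)\cdot\mathbf{u}=\cos\bigl(\th(t)-\th(L/2)\bigr)>0\,$ throughout $\,[0,L]$. Integrating contradicts the vanishing integral above, because $\,0=(\g(L)-\g(0))\cdot\mathbf{u}=\int_0^L\g'(t)\cdot\mathbf{u}\,dt>0$. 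Hence $\,L\ge\pi/\bar\k$, as claimed.

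The one delicate point---and the source of the sharp constant---is centering the estimate at $\,t=L/2\,$ rather than at an endpoint. Measuring from $\,t=0\,$ would give only $\,|\th(t)-\th(0)|\le\bar\k L<\pi$, which keeps the tangent off a single antipodal direction but need not trap it in an open half-plane. The midpoint halves the turning budget to $\,\pi/2$, exactly forcing every tangent to have strictly positive component along $\,\mathbf{u}$. I expect no further obstacle: $\,C^{2}\,$ regularity is all that is needed to make $\,\th\,$ continuously differentiable and the integrals legitimate.
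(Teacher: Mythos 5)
Your proof is correct, and it takes a genuinely different route from the paper's. The paper works directly with the arc $\,A\,$ between the two preimages: it picks the point $\,S\in A\,$ farthest from $\,p\,$ (where the tangent is perpendicular to $\,\a(S)-p\,$), then applies a Rolle-type argument to a linear function vanishing on $\,\a(S)-p\,$ to produce points $\,s_{-},s_{+}\,$ on either side of $\,S\,$ where the tangent is \emph{parallel} to $\,\a(S)-p\,$; each half-arc therefore turns by at least $\,\pi/2\,$, so the total absolute curvature between the two visits is at least $\,\pi\,$, and dividing by $\,\bar\k\,$ gives the length bound. You instead argue by contradiction with the classical half-plane trap: if the connecting arc were shorter than $\,\pi/\bar\k\,$, centering the turning-angle estimate at the midpoint tangent $\,\mathbf{u}=\g'(L/2)\,$ confines every unit tangent to the open half-plane $\,\{w\cdot\mathbf{u}>0\}$, and integrating $\,\g'\cdot\mathbf{u}\,$ contradicts $\,\g(L)=\g(0)\,$. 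Both arguments are sound and yield the sharp constant; the paper's is the more geometric and directly establishes the slightly stronger fact that the total absolute curvature between two visits to a point is at least $\,\pi\,$ (which is the form actually invoked in its displayed inequality), while yours is more elementary --- only the lifted turning angle and the fundamental theorem of calculus --- and your observation about why the estimate must be centered at the midpoint rather than an endpoint is exactly the right delicate point to flag.
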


\begin{proof}
	In either arc $\,A\,$ joining $\,\a_{1}\,$ to $\,\a_{2}\,$ in $\,\US^{1}$, some point 
	$\,S\in A\,$ maximizes $\,|\a(s)-p|^{2}\,$ among $\,s\in A$, and $\,\dot\a(S)\,$ is 
	then \tit{perpendicular} to $\,\a(S)-p$. At the same time, any non-trivial linear 
	function that vanishes on $\,\a(S)-p\,$ will attain at least one local extremum on each 
	component of $\,A\setminus\{S\}$, say at points $\,\a(s_{-})\,$ and $\,\a(s_{+})\,$ 
	respectively. Since $\,\dot\a\,$ must be \tit{parallel} to $\,\a(S)-p\,$ at these 
	points, the intervals $\,(s_{-},S),\,(S,s_{+})\subset A\,$ both map to arcs with 
	total absolute curvature at least $\,\pi/2\,$. As $\,\a\,$ has unit speed, we 
	may then deduce\medskip
	\[
		\th\l(s_{1},s_{2}\r)\bar\k
		\ge 
		\th\l(s_{-},s_{+}\r)\bar\k
		\ge 
		\int_{s_{-}}^{s_{+}}|\k_{g}(s)|\ ds 
		\ge 
		\pi
	\]
\end{proof} 

In general, a $\,C^{2}\,$ immersion $\,\US^{1}\to\R^{2}$ can have infinitely many double 
points, even without retracing any open arc along its image. Not so for clean immersions:

\begin{lem}\label{lem:clean}
	A clean $\,C^{2}\,$ immersion $\,\a\colon \US^{1}\to\R^{2}$ has at most finitely many 
	double-points, and at any double-point $\,p$, there is an $\,\eps=\eps(p)>0\,$ for 
	which $\,\a^{-1}(B(p,\eps))\,$ is a finite union of embedded arcs passing through 
	$\,p\,$ with pairwise distinct tangent lines.
\end{lem}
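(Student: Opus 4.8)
The plan is to prove both assertions simultaneously using compactness of $\US^1$ together with the local transversality guaranteed by cleanness. First I would show finiteness of the double-point set by contradiction: suppose there are infinitely many double-points $p_n$. For each, pick distinct preimages $s_n, t_n \in \US^1$ with $\a(s_n)=\a(t_n)=p_n$. By compactness of $\US^1 \times \US^1$, after passing to a subsequence we may assume $s_n \to s_\ast$ and $t_n \to t_\ast$, so $\a(s_\ast)=\a(t_\ast)$ by continuity. The key dichotomy is whether $s_\ast = t_\ast$ or $s_\ast \neq t_\ast$.

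In the case $s_\ast \neq t_\ast$, the point $q := \a(s_\ast)=\a(t_\ast)$ is itself a double-point, so by cleanness the images of small neighborhoods $U_1 \ni s_\ast$ and $U_2 \ni t_\ast$ meet transversally at $q$ and nowhere else within those neighborhoods (transversal intersection of two embedded arcs is locally a single point). But for large $n$ we have $s_n \in U_1$ and $t_n \in U_2$ with $\a(s_n)=\a(t_n)$, producing intersection points of $\a(U_1)$ and $\a(U_2)$ accumulating at $q$, contradicting local transversality. In the case $s_\ast = t_\ast =: u$, I would use that $\a$ is an immersion: near $u$ the map $\a$ is an embedding (an immersion restricted to a small enough arc is injective), so $\a(s_n)=\a(t_n)$ with both $s_n, t_n$ near $u$ forces $s_n = t_n$ for large $n$, contradicting their distinctness. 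Either way we reach a contradiction, so the double-point set is finite.

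For the second assertion, fix a double-point $p$ and consider its full preimage $\a^{-1}(p) = \{t_1, \dots, t_m\}$, which is finite since $\a$ is an immersion (hence locally injective, so $\a^{-1}(p)$ is discrete in the compact $\US^1$). Cleanness says the arcs through the $t_i$ meet pairwise transversally, hence have pairwise distinct tangent lines at $p$. It remains to choose $\eps$ small enough that $\a^{-1}(B(p,\eps))$ consists \emph{only} of small arcs around the $t_i$ and contains no other sheets. Here I would argue: the set $\a^{-1}(p)$ is finite, so I can pick disjoint arcs $I_i \ni t_i$ on which $\a$ is an embedding; then $K := \US^1 \setminus \bigcup_i \mathring{I_i}$ is compact and disjoint from $\a^{-1}(p)$, so $\a(K)$ is a compact set not containing $p$, whence $\dist(p, \a(K)) > 0$. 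Choosing $\eps$ less than this distance and small enough to stay inside each $\a(I_i)$ guarantees $\a^{-1}(B(p,\eps)) \subset \bigcup_i I_i$, and shrinking further makes each component an embedded arc through $p$.

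The main obstacle is the $s_\ast = t_\ast$ coincidence case in the finiteness argument — one must be careful that accumulation of double-points does not occur via two preimages merging together, and the resolution leans essentially on the immersion hypothesis (local injectivity) rather than on cleanness, which only governs genuinely distinct preimages. A secondary subtlety is ensuring the $\eps$-ball pulls back to exactly the expected arcs and no stray sheets; this is handled cleanly by the compactness-distance argument above, but it is worth stating explicitly since it is the only place the \emph{global} structure of $\US^1$ (as opposed to purely local behavior) enters.
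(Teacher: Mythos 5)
Your proof is correct, and for the finiteness claim it takes a genuinely different route from the paper's. The paper's engine is Observation \ref{obs:kbar}: since the loop is $C^{2}$ with unit speed, any two distinct preimages of a single point are separated by intrinsic distance at least $\pi/\bar\kappa$, where $\bar\kappa=\max|\kappa_{g}|$. That uniform separation kills your $s_{\ast}=t_{\ast}$ case outright: the two preimage sequences cannot merge, so the paper only has to treat a genuine limiting double-point, where it contradicts cleanness by noting that the difference quotients $\bigl(\alpha(s_{n})-\alpha(s)\bigr)/(s_{n}-s)$ and $\bigl(\alpha(s_{n}')-\alpha(s')\bigr)/(s_{n}'-s')$ have equal numerators, hence collinear limits $\dot\alpha(s)$ and $\dot\alpha(s')$. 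You instead handle the dichotomy by hand: local injectivity of an immersion disposes of the coincident case, and the fact that a transversal intersection of two embedded arcs is locally a single point (inverse function theorem applied to $(s,t)\mapsto\gamma_{1}(s)-\gamma_{2}(t)$) disposes of the distinct case. Your version is more elementary --- it uses no curvature and would work for clean $C^{1}$ immersions --- while the paper's curvature bound buys a quantitative separation that it reuses verbatim to show $\alpha^{-1}(p)$ is finite, where you again substitute local injectivity plus compactness. For the second assertion your compactness/distance argument excluding stray sheets of $\alpha^{-1}(B(p,\varepsilon))$ is actually spelled out more carefully than in the paper, which simply cites the inverse function theorem; the one detail you wave at (``shrinking further makes each component an embedded arc through $p$'') is easily justified by noting that $|\alpha(s)-p|^{2}$ has a nondegenerate minimum at each $t_{i}$, and is no less complete than the paper's own treatment of that point.
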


\begin{proof}
	With no loss of generality, assume $\,\a\,$ has unit speed. Set 
	$\,\bar\k:=\,\max_{\US^{1}}|\k_{g}|\,$ as in the \hyperref[obs:kbar]{Observation}
	above.
	
	Suppose (toward a contradiction) that $\,\a\,$ had infinitely many double 
	points. Since $\,\US^{1}$ and $\,\a(\US^{1})\,$ are both compact, that would imply the 
	existence of a cluster point $\,p\in\a(\US^{1})\,$, along with convergent sequences 
	$\,(s_{n}), (s_{n}')\subset\US^{1}$ with\smallskip
	\[
		s_{n}\ne s_{n}'
		\quad\text{and}\quad
		\a(s_{n})=\a(s_{n}')\to p
	\]
	and yet $\,\a(s_{n})\ne p\,$ for all $\,n\in\N\,$. 
	
	\hyperref[obs:kbar]{Observation \ref{obs:kbar}} ensures 
	$\,|s_{n}-s_{n}'|>\pi/\bar\k$, so the respective  limits $\,s\,$ and $\,s'\,$ of 
	these sequences must obey that same estimate. In particular, $\,s\ne s'$. By continuity, 
	however, $\,\a(s)=\a(s')$, which forces the collinearity of \smallskip
	\[
		\frac{\a(s_{n})-\a(s)}{s_{n}-s}
		\quad\text{and}\quad
		\frac{\a(s_{n}')-\a(s')}{s_{n}'-s'}
	\]
	
	for each $\,n$. Letting $\,n\to\infty$, we see that $\,\dot\a(s)\,$ and $\,\dot\a(s')$
	must also be collinear. This contradicts our assumption of clean double-points.
	So $\,\a\,$ has at most finitely many double-points.
	
	To prove the remaining claim, we note that since \hyperref[obs:kbar]{Observation
	\ref{obs:kbar}} puts a lower bound on the distance between any two points in 
	$\,\a^{-1}(p)$, the compactness of $\,\US^{1}$ makes $\,\a^{-1}(p)\,$ 
	finite. By definition of \tit{immersion}, the inverse function theorem then 
	yields the asserted $\,\eps(p)>0$, while that of \tit{clean} makes tangent 
	lines pairwise distinct at $\,p$.
\end{proof}

\begin{lem}\label{lem:repo}
	Suppose $\,\a,\b\co\US^{1}\to\R^{2}$ are clean unit-speed $\,C^{2}$ loops with the 
	same image. Suppose $\,p=\b(t_0)\,$ is a double-point of $\,\a$, and $\,\eps>0\,$ 
	is small enough to make $\,\a^{-1}(B(p,\eps))\,$ a union of embedded arcs with 
	distinct tangent lines at $\,p$, as provided by \hyperref[lem:clean]{Lemma 
	\ref{lem:clean}}. Then one such arc contains $\,\b(t_{0}-\d, t_{0}+\d)\,$ for all 
	sufficiently small $\,\d>0$.
\end{lem}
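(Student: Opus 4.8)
The plan is to combine the local normal form at $\,p\,$ furnished by \hyperref[lem:clean]{Lemma \ref{lem:clean}} with a velocity argument that pins down where $\,\b\,$ may travel. First I would shrink $\,\eps\,$ if necessary so that, beyond the conclusion of \hyperref[lem:clean]{Lemma \ref{lem:clean}}, the finitely many image arcs $\,A_{1},\dots,A_{m}\,$ (the images of the preimage arcs, so that $\,\a(\US^{1})\cap B(p,\eps)=A_{1}\cup\dots\cup A_{m}$) meet one another only at $\,p$: since these arcs are $\,C^{1}\,$ with pairwise distinct tangent lines there, on a small enough ball each is a near-straight graph over its own tangent line, and two such graphs through $\,p\,$ with different directions cross only at $\,p$. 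Removing $\,p\,$ then splits $\,A_{1}\cup\dots\cup A_{m}\,$ into $\,2m\,$ pairwise disjoint half-open arcs (``rays''), which are exactly the connected components of $\,(A_{1}\cup\dots\cup A_{m})\setminus\{p\}\,$ inside $\,B(p,\eps)$. Because $\,\b(t_{0})=p$, \hyperref[obs:kbar]{Observation \ref{obs:kbar}} applied to $\,\b\,$ keeps $\,\b(t)\ne p\,$ for all $\,t\,$ with $\,0<|t-t_{0}|\,$ small; and since $\,\b(\US^{1})=\a(\US^{1})$, shrinking $\,\d\,$ further forces $\,\b(t_{0}-\d,\,t_{0}+\d)\subset \a(\US^{1})\cap B(p,\eps)=A_{1}\cup\dots\cup A_{m}$.

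Next I would localize $\,\b\,$ to a single ray on each side of $\,t_{0}$. For such $\,\d$, the restrictions $\,\b|_{(t_{0}-\d,\,t_{0})}\,$ and $\,\b|_{(t_{0},\,t_{0}+\d)}\,$ are continuous maps of connected intervals into $\,(A_{1}\cup\dots\cup A_{m})\setminus\{p\}$, so each image, being connected, lands in a single ray; call them $\,r_{-}\,$ and $\,r_{+}$, belonging to arcs $\,A_{i}\,$ and $\,A_{j}\,$ respectively. It then remains only to show $\,A_{i}=A_{j}$, for then $\,\b(t_{0}-\d,\,t_{0}+\d)\subset r_{-}\cup\{p\}\cup r_{+}\subset A_{i}$, which is the asserted single arc.

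The decisive step is the velocity computation. Writing $\,A_{i}\,$ near $\,p\,$ as an embedded $\,\g_{i}(u)\,$ with $\,\g_{i}(0)=p\,$ and tangent direction $\,v_{i}:=\dot\g_{i}(0)\ne0$, I would note that for $\,\b(t)=\g_{i}(u(t))\in r_{-}\,$ we have $\,u(t)\to0\,$ as $\,t\to t_{0}^{-}$, whence the chord direction $\,(\b(t)-p)/|\b(t)-p|\,$ approaches $\,\pm v_{i}/|v_{i}|$, while unit speed gives $\,|\b(t)-p|/|t-t_{0}|\to1$. Hence $\,\dot\b(t_{0})=\lim_{t\to t_{0}^{-}}(\b(t)-p)/(t-t_{0})\,$ is a nonzero multiple of $\,v_{i}$, i.e.\ parallel to the tangent line of $\,A_{i}$. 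The identical computation from the right shows $\,\dot\b(t_{0})\,$ parallel to the tangent line of $\,A_{j}$. Since $\,\b\,$ has unit speed, $\,\dot\b(t_{0})\ne0$, so its direction selects exactly one line among the pairwise distinct tangent lines of $\,A_{1},\dots,A_{m}$; therefore $\,A_{i}=A_{j}$, completing the argument.

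I expect the main obstacle to be the geometric separation claim in the first step, namely that shrinking $\,\eps\,$ both makes the arcs meet only at $\,p\,$ and realizes the rays as the genuine connected components of the punctured union. Everything downstream (the connectedness localization and the velocity limit) is then routine, but it rests entirely on that normal form. This is precisely where the cleanness hypothesis enters, through the distinctness of the tangent lines supplied by \hyperref[lem:clean]{Lemma \ref{lem:clean}}; without it the rays could merge and $\,\b\,$ could legitimately switch arcs at $\,p$, exactly as \hyperref[fig:unclean]{Figure \ref{fig:unclean}} illustrates.
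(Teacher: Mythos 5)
Your proof is correct and rests on the same key mechanism as the paper's own argument: the two-sided derivative $\,\dot\b(t_{0})\,$, computed as a chord limit from each side, would have to be parallel to the tangent lines at $\,p\,$ of both arcs involved, contradicting their pairwise distinctness unless the arcs coincide. The paper packages this as a subsequence/contradiction argument rather than via your explicit separation of the arcs and decomposition of the punctured union into rays, but that is a difference of bookkeeping, not of substance.
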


\begin{proof}
	Take $\,\eps>0\,$ small enough to satisfy the hypothesis of \hyperref[lem:clean]
	{Lemma \ref{lem:clean}}, and let $\,A_{1}, A_{2}, \dots, A_{k}\,$ denote the 
	(distinct) arcs whose union then constitutes $\,\a^{-1}(B(p,\eps))\,$. Define 
	$\,I_{n}:=(t_{0}-\frac{1}{n},\,t_{0}+\frac{1}{n})\,$ for $\,n\in\N$. When $\,n\,$ is
	large, $\,\b\,$ embeds $\,I_{n}$, and since $\,\b\,$ and $\,\a\,$ have the same image, 
	$\,\b(I_{n})\,$ must then lie in the \tit{union} of the $\,A_{i}$'s.
	
	If for every such $\,n$, we could find $\,t_{n}, t_{n}'\ne t\,$ in $\,I_{n}\,$
	with $\,\b(t_{n})\,$ and $\,\b(t_{n}')\,$  in \tit{different} $\,A_{i}$'s, we could
	renumber the $\,A_{i}$'s and pass to a subsequence to arrange $\,\b(t_{n})\in A_{1}\,$ 
	and $\,\b(t_{n}')\in A_{2}\,$ for all large $\,n$. 
	But $\,\lim_{n\to\infty} t_{n}= \lim_{n\to\infty} t_{n}'=t_{0}$, and $\,\b\,$ is 
	differentiable, so computing $\,\dot\b(t_0)\,$ on the two different sequences would 
	give the same result, forcing the tangent lines to $\,A_{1}\,$ and $\,A_{2}\,$ at 
	$\,p=\b(t_0)\,$ to agree. This would contradict the last assertion of 
	\hyperref[lem:clean]{Lemma \ref{lem:clean}}. So $\,\b(I_{n})\,$ must stay in one 
	$\,A_{i}$, as claimed.
\end{proof}

\begin{definition}
	By the \tit{lift} of an immersed unit-speed arc $\,\a\co(a,b)\to\R^{2}$, we mean the 
	arc parametrized by $\,s\mapsto\left(\a(s),\dot\a(s)\right)\,$ in the unit tangent
	bundle $\,\R^{2}\times\US^{1}$.
	\eob
\end{definition}

Using \hyperref[lem:clean]{Lemma \ref{lem:clean}}, the reader will easily verify

\begin{obs}\label{obs:lift}
	If $\,\a\co\US^{1}\to\R^{2}$ is a cleanly immersed loop, its lift is embedded.
	The lift of any reparametrization $\,\a\of\phi\,$ either reparame\-trizes that of 
	$\,\a$, or never meets it, depending on whether $\,\phi\,$ preserves or 
	reverses orientation respectively.
\end{obs}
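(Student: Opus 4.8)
The plan is to settle both assertions using only the pairwise-distinct-tangent-line conclusion of \hyperref[lem:clean]{Lemma \ref{lem:clean}}. First I would verify that the lift $\,L(s):=(\a(s),\dot\a(s))\,$ of a clean unit-speed loop is an injective immersion of the compact manifold $\,\US^{1}$, hence an embedding into the (Hausdorff) bundle $\,\R^{2}\times\US^{1}$. It immerses because $\,L'(s)=(\dot\a(s),\ddot\a(s))\,$ and $\,\dot\a(s)\,$ never vanishes (unit speed). For injectivity I would suppose $\,L(s)=L(t)\,$ with $\,s\ne t$; then $\,p:=\a(s)=\a(t)\,$ is a double-point, so by the Lemma it lies on two embedded arcs of $\,\a^{-1}(B(p,\eps))\,$ with distinct tangent lines at $\,p$, whereas the equality $\,\dot\a(s)=\dot\a(t)\,$ forces those lines to coincide---a contradiction.

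For the second assertion the only wrinkle is that $\,\a\of\phi\,$ need not have unit speed, so its lift must be read through the unit tangent direction. At parameter $\,s\,$ that direction is $\,\dot\a(\phi(s))\,\mathrm{sign}\,\phi'(s)$, so the lift of $\,\a\of\phi\,$ is $\,s\mapsto\l(\a(\phi(s)),\,\pm\dot\a(\phi(s))\r)$, with sign $\,+\,$ when $\,\phi\,$ preserves orientation and $\,-\,$ when it reverses. In the preserving case this equals $\,L\of\phi$, which is a genuine orientation-preserving reparametrization of $\,L\,$ by the diffeomorphism $\,\phi$, so the lift of $\,\a\of\phi\,$ reparametrizes that of $\,\a$.

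In the reversing case I would show the two lifts are disjoint. A shared value would produce $\,u:=\phi(s)\,$ and $\,t\,$ with $\,\a(u)=\a(t)\,$ and $\,\dot\a(t)=-\dot\a(u)$. If $\,u=t\,$ this reads $\,\dot\a(t)=-\dot\a(t)$, impossible for a unit vector; if $\,u\ne t\,$ then $\,\a(u)=\a(t)\,$ is a double-point at which the velocities are anti-parallel, so the two tangent lines through it coincide, once more contradicting \hyperref[lem:clean]{Lemma \ref{lem:clean}}. The substance of the argument is thus carried entirely by cleanness; the one place demanding care---and the only real obstacle, since the statement is otherwise elementary---is the bookkeeping of $\,\mathrm{sign}\,\phi'$, as it is precisely this sign that separates the ``reparametrizes'' alternative from the ``never meets'' alternative.
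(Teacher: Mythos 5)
Your proposal is correct and follows exactly the route the paper intends: the paper offers no written proof, stating only that ``the reader will easily verify'' the Observation using Lemma \ref{lem:clean}, and your argument is precisely that verification (injectivity of the lift and disjointness in the orientation-reversing case both reduce to the distinct-tangent-lines conclusion of that lemma, plus the sign bookkeeping for $\phi'$). No gaps.
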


We can now prove the fact that makes the main results of this section accessible.

\begin{prop}\label{prop:repo}
	Suppose $\,\a,\b\co\US^{1}\to\R^{2}$ are clean, unit-speed $\,C^{2}\,$ immersions with 
	the same image. Then $\,\b\,$ reparametrizes $\,\a$, and the two loops have the 
	same orientation if and only if their lifts meet.
\end{prop}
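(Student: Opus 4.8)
The plan is to resolve the self-crossings of the common image $C:=\a(\US^{1})=\b(\US^{1})$ by passing to the unit tangent bundle $\R^{2}\times\US^{1}$. Write $\tilde\a(s):=(\a(s),\dot\a(s))$ for the (oriented) lift of $\a$, and let $\tilde\a^{-}(s):=(\a(s),-\dot\a(s))$ denote the lift of $\a$ traversed backwards; define $\tilde\b$ likewise. By \hyperref[obs:lift]{Observation \ref{obs:lift}} (which rests on cleanness), $\tilde\a,\tilde\a^{-},\tilde\b$ are all embedded circles. First I would record that $\tilde\a(\US^{1})$ and $\tilde\a^{-}(\US^{1})$ are \emph{disjoint}: a common point would give $\a(s)=\a(s')$ with $\dot\a(s)=-\dot\a(s')$, forcing either $\dot\a(s)=0$ (against unit speed) or a double-point of $\a$ whose two branches share a tangent \emph{line}, which cleanness forbids via \hyperref[lem:clean]{Lemma \ref{lem:clean}}.

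The central step, and the one I expect to be the main obstacle, is the localization claim that $\tilde\b(\US^{1})\subseteq\tilde\a(\US^{1})\cup\tilde\a^{-}(\US^{1})$. Fix $t_{0}$ and set $p=\b(t_{0})$. If $p$ is a double-point of $\a$, then \hyperref[lem:repo]{Lemma \ref{lem:repo}} confines $\b(t_{0}-\d,t_{0}+\d)$ to the image $\a(A_{i})$ of one of the embedded branch-arcs $A_{i}$ furnished by \hyperref[lem:clean]{Lemma \ref{lem:clean}}; if instead $\a^{-1}(p)$ is a single point, the same conclusion holds trivially, since $C$ is a single embedded arc near $p$. In either case, writing $a_{i}$ for the unique preimage of $p$ on that arc, $\dot\b(t_{0})$ is a unit vector tangent to an embedded arc at $\a(a_{i})$, hence $\dot\b(t_{0})=\pm\dot\a(a_{i})$ and so $\tilde\b(t_{0})\in\{\tilde\a(a_{i}),\,\tilde\a^{-}(a_{i})\}$. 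This is exactly where cleanness is indispensable: it is what prevents $\b$ from branching onto a different sheet at $p$, the pathology illustrated by \hyperref[fig:unclean]{Figure \ref{fig:unclean}}.

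Granting the localization claim, $\tilde\b(\US^{1})$ is connected yet lies in the disjoint union of two embedded circles, so it lies entirely in one of them, say $\tilde\a(\US^{1})$ (the argument for $\tilde\a^{-}(\US^{1})$ is identical). Then $\phi:=\tilde\a^{-1}\of\tilde\b\co\US^{1}\to\US^{1}$ is a well-defined injective $C^{1}$ immersion of the circle, hence a diffeomorphism; comparing first coordinates in $\tilde\b=\tilde\a\of\phi$ yields $\b=\a\of\phi$, so $\b$ reparametrizes $\a$. Matching second coordinates, $\dot\b=\dot\a\of\phi$, shows $\phi$ preserves orientation in this case; in the alternative case $\tilde\b=\tilde\a^{-}\of\psi$ gives $\b=\a\of\psi$ with $\dot\b=-\dot\a\of\psi$, forcing $\psi'=-1$, so $\psi$ reverses orientation.

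Finally, the orientation dichotomy reads off the same alternative. If $\tilde\b(\US^{1})=\tilde\a(\US^{1})$, the lifts coincide as sets and in particular meet, while $\phi$ preserves orientation, so the loops carry the same orientation. If instead $\tilde\b(\US^{1})=\tilde\a^{-}(\US^{1})$, then by the disjointness established in the first paragraph the lifts never meet, while the reparametrization reverses orientation. Thus the two loops share orientation precisely when their lifts meet, which is the assertion of the proposition and is consistent with \hyperref[obs:lift]{Observation \ref{obs:lift}}.
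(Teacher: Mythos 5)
Your proof is correct and follows essentially the same route as the paper: both pass to the embedded lifts in $\,\R^{2}\times\US^{1}$, use \hyperref[lem:repo]{Lemma \ref{lem:repo}} to confine $\,\b\,$ locally to a single branch of $\,\a$, and finish with a connectedness argument. The only difference is organizational---you place $\,\tilde\b(\US^{1})\,$ inside the disjoint union $\,\tilde\a(\US^{1})\sqcup\tilde\a^{-}(\US^{1})\,$ and let connectedness select one circle, whereas the paper runs an open/closed argument on the coincidence set of the two lifts and handles the orientation-reversing case by reducing it to the orientation-preserving one via an auxiliary reversal; your packaging treats both cases uniformly but is the same argument in substance.
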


\begin{proof} 
	By \hyperref[obs:lift]{Observation \ref{obs:lift}}, $\,\a\,$ and $\,\b\,$ have embedded 
	lifts. If they \tit{meet} above $\,\b(b)\,$ for some $\,b\in\US^{1}$, then
	Lemma \ref{lem:repo} provides an $\,a\in\US^{1}$ and a $\,\d>0\,$ 
	such that $\,(a-\d,a+\d)\,$ and $\,(b-\d,b+\d)\,$ lift, via $\,\a\,$ and $\,\b\,$
	respectively, to the same arc in $\,\R^{2}\times\US^{1}$. The lifts of 
	$\,\a\,$ and $\,\b\,$ therefore meet along a set relatively open in the image of each. 
	The coincidence set is also closed (trivially) so the two lifts coincide entirely, 
	manifesting (via the Inverse Function Theorem) a $\,C^{1}\,$ transition diffeomorphism 
	between them.
	
	The identity map on $\,\R^{2}\times\US^{1}$ then induces a diffeomorphism between the 
	circles parametrizing $\,\a\,$ and $\,\b$, allowing us to read $\,\b\,$ as a 
	reparametrization of $\,\a$. Orientation is preserved, for the lifts would otherwise be
	completely disjoint by \hyperref[obs:lift]{Observation \ref{obs:lift}}. 
	
	If the lifts \tit{are} completely disjoint, then (since clean immersions have at 
	most finitely many double-points by \hyperref[lem:clean]{Lemma \ref{lem:clean}}) 
	we can find a point $\,p\in\a(\US^{1})\,$ with a single pre-image $\,\{t\}=\a^{-1}(p)$. 
	Then $\,\a\,$ and $\,\b\,$ share a unique tangent line at $\,p$. If their lifts don't 
	meet, $\,\b\,$ must lift to $\,(p,-\dot\a(t_0))\,$ above $\,p$. The lift of any 
	orientation-\tit{reversing} reparametrization  $\,\b'\,$ of $\,\b\,$ thus \tit{meets} 
	that of $\,\a\,$ above $\,p$, making $\,\b'\,$ an orientation-\tit{preserving} 
	reparametrization of $\,\a\,$ by what we have already proven. So $\,\b\,$ 
	\tit{reverses} orientation, as claimed.
\end{proof}

We will apply the \hyperref[prop:repo]{Proposition} just proven mainly via this
immediate

\begin{cor}\label{cor:repo}
	Any clean \emph{central} $\,C^{2}\,$ loop is reparametrized by its central symmetry.
\end{cor}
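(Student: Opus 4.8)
The plan is to recognize this corollary as a one-step application of Proposition \ref{prop:repo}. Let $\sigma\co\R^{2}\to\R^{2}$ denote the central symmetry of the loop's image: the point reflection $\sigma(x)=2c-x$ fixing the center $c$. By Definition \ref{def:central}, saying that $\a$ is central means precisely that $\sigma$ carries $\a(\US^{1})$ to itself. I would set $\b:=\sigma\of\a$ and observe that, since $\sigma$ is an involution, the image of $\b$ equals $\sigma(\a(\US^{1}))=\a(\US^{1})$. Thus $\a$ and $\b$ share a common image.

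The next step is to check that $\b$ inherits the structural hypotheses of Proposition \ref{prop:repo}. Here one uses that $\sigma$ is a rigid motion of $\,\R^{2}$ (indeed a rotation through $\pi$ about $c$), hence an isometry and a diffeomorphism. As an isometry it preserves both $\,C^{2}\,$ regularity and arc length, so $\b=\sigma\of\a$ is again a unit-speed $\,C^{2}\,$ immersion. As a diffeomorphism it carries transversal intersections to transversal intersections, so $\b$ is clean whenever $\a$ is. These verifications are entirely routine, since they only track how a fixed rigid motion acts on the defining data.

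With $\a$ and $\b$ now exhibited as clean, unit-speed $\,C^{2}\,$ immersions sharing the same image, Proposition \ref{prop:repo} applies verbatim and yields that $\b=\sigma\of\a$ reparametrizes $\a$. That is exactly the assertion that the central symmetry reparametrizes $\a$, completing the proof. I do not expect any genuine obstacle: all the substantive work lives in Proposition \ref{prop:repo}, and the corollary merely feeds it the pair $\,(\a,\sigma\of\a)$. If desired, one could further note that the orientation dichotomy in Proposition \ref{prop:repo}---whether the lifts of $\a$ and $\b$ meet---then controls whether $\sigma$ preserves or reverses orientation, which is the distinction illustrated by the circle versus the figure-8 in \hyperref[eg:unclean]{Example \ref{eg:unclean}}; but the present statement needs only the bare existence of the reparametrization.
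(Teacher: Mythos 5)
Your proposal is correct and is exactly the argument the paper intends: the paper states the corollary as an immediate consequence of Proposition \ref{prop:repo}, obtained by applying it to the pair $\,\a\,$ and $\,\b=\sigma\of\a$, after noting that the rigid motion $\,\sigma\,$ preserves the image, cleanness, regularity, and (up to the harmless arc-length normalization) unit speed. Nothing further is needed.
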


As \hyperref[fig:egO8]{Figure \ref{fig:egO8}} shows, the reparametrization induced by a 
central symmetry of a clean loop may preserve or reverse orientation. The two 
possibilities have starkly different geometric implications, however. To see that, 
we will need \hyperref[cor:cm]{Corollary \ref{cor:cm}} below---a further consequence 
of \hyperref[prop:repo]{Proposition \ref{prop:repo}}---which requires this

\begin{definition}\label{def:cm}
	The \tit{centroid} ({center of mass}) $\,\mu(\a)\,$ of a $\,C^{1}\,$ loop 
	$\,\a\colon\US^{1}\to\R^{n+1}\,$ with length $\,L\,$ is the mean value of $\,\a\,$ 
	relative to an arc-length parameter $\,s\,$:
	\[
		\mu(\a):=\frac{1}{L}\int_{\US^{1}}\a(s)\ ds
	\]
\end{definition}

Note that \tit{the centroid of a loop with central symmetry may not coincide with its 
center of symmetry.} For example, take the circles $\,(x\pm 1)^{2}+y^{2}=1$, and 
parametrize their union, starting at $\,\nv 0$, by tracing clockwise around the 
right-hand lobe, then counterclockwise around the left, and finally, clockwise around 
the right again. The origin will be a center of symmetry, but the centroid lies at $\,(1/3,0)$. 
Clean loops, however, never exhibit this kind of discrepancy:

\begin{cor}\label{cor:cm}
	For a clean, central $\,C^{2}\,$ loop, center of symmetry and centroid coincide. 
\end{cor}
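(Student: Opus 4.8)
The plan is to lean on \hyperref[cor:repo]{Corollary \ref{cor:repo}} together with the fact that the centroid of \hyperref[def:cm]{Definition \ref{def:cm}} is unchanged by \emph{isometric} reparametrizations of the circle. First I would normalize by translating the loop so that its center of symmetry sits at the origin; central symmetry then reads $\,-\a(\US^{1})=\a(\US^{1})$, and the claim reduces to showing that the centroid $\,\mu(\a)\,$ equals $\,\nv 0$.

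The engine is \hyperref[cor:repo]{Corollary \ref{cor:repo}}, which hands us a diffeomorphism $\,\phi\co\US^{1}\to\US^{1}$ with $\,-\a=\a\of\phi$. Here cleanness does the real work: \hyperref[fig:unclean]{Figure \ref{fig:unclean}} shows that without it a central symmetry need not reparametrize the loop at all, and the two-circle example just before this corollary shows that when this fails the centroid genuinely drifts off center. I would then argue that $\,\phi\,$ is an isometry of $\,\US^{1}$: since $\,\a\,$ has unit speed and $\,-\a\,$ (a rigid reflection of $\,\a$) does as well, differentiating $\,-\a=\a\of\phi\,$ gives $\,|\phi'|\equiv 1$, so $\,\phi\,$ is a rotation or a reflection-rotation and in particular preserves the arc-length measure $\,ds$. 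Verifying this measure-preservation is the step I would flag as the crux, because a generic reparametrization does not fix $\,ds$ and the centroid is correspondingly not a reparametrization invariant in general; the entire force of the hypotheses funnels through this point.

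With $\,\phi\,$ known to preserve $\,ds$, I would finish by computing $\,\mu(-\a)\,$ in two ways. Reading $\,-\a\,$ literally,
\[
	\mu(-\a)=\frac{1}{L}\int_{\US^{1}}\bigl(-\a(s)\bigr)\,ds=-\mu(\a),
\]
while reading $\,-\a=\a\of\phi\,$ and changing variables $\,u=\phi(s)\,$ by the measure-preserving $\,\phi\,$ gives
\[
	\mu(-\a)=\frac{1}{L}\int_{\US^{1}}\a\bigl(\phi(s)\bigr)\,ds=\frac{1}{L}\int_{\US^{1}}\a(u)\,du=\mu(\a).
\]
Comparing the two expressions forces $\,\mu(\a)=-\mu(\a)$, hence $\,\mu(\a)=\nv 0$; that is, the centroid coincides with the center of symmetry. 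Once the isometry claim is in place this last computation is routine and the conclusion is immediate.
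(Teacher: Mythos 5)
Your proposal is correct and follows essentially the same route as the paper's own proof: invoke the reparametrization $\,-\a=\a\of\phi\,$ supplied by \hyperref[cor:repo]{Corollary \ref{cor:repo}}, deduce $\,|\phi'|\equiv 1\,$ from unit speed, and compute the centroid of the reflected loop two ways via the arc-length-preserving change of variables. The only cosmetic difference is that you translate the center to the origin first, whereas the paper keeps a general center $\,\nv c\,$ and derives $\,2\nv c-\mu(\a)=\mu(\a)\,$ directly.
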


\begin{proof}
	Suppose $\,\a\co\US^{1}\to\R^{2}$ is a clean $\,C^{2}\,$ loop with center of symmetry 
	at $\,\nv c\in\R^{2}$, and length $\,L$. View it as a unit-speed $L$-periodic 
	immersion $\,\R\to\R^{2}$. \hyperref[prop:repo]{Proposition \ref{prop:repo}} provides 
	a diffeomorphism $\,\phi\,$ of the circle (which lifts to $\,\R\,$) such that 
	$\,2\nv c-\a=\a\circ\phi$. By the chain rule and constancy of 	speed (which is 
	preserved by the reflection), we must also have $\,|\phi'|\equiv 1$. If we denote 
	the unit-speed parameter for $\,\a\,$ by $\,s$, then $\,u=\phi(s)\,$ gives a 
	unit-speed parameter for its reflection $\,2\nv c-\a\,$, whose centroid is then 
	clearly\smallskip
	\begin{align*}
		2\nv c-\mu(\a)
		&=
		\frac{1}{L}\int_{0}^{L}2\nv c-\a(s)\ ds\\
		&=
		\frac{1}{L}\int_{0}^{L}\a\circ\phi(s)\ ds\\
		&=
		\frac{1}{L}\int_{0}^{L}\a\circ\phi(s)\,|\phi'(s)|\ ds\\
		&=
		\frac{1}{L}\int_{0}^{L}\a(u)\ du\\
		&=\mu(\a)
	\end{align*}
	Thus $\,\mu(\a)=\nv c$, as claimed.
\end{proof}
\medskip

\begin{definition}\label{def:HLC}
	When an immersed $\,C^{1}$\, loop $\,\a\colon\US^{1}\to\R^{2}$ is central with respect
	to $\,\nv c\in\R^{2}$, we call the line 
	segment joining $\,\a(t)\,$ to $\,2\nv c-\a(t)\,$ a \tit{diameter} of $\,\a$. If we can
	parametrize $\,\a\,$ so that\smallskip
	\begin{equation}\label{eqn:dc}
		2\nv c-\a(t) = \a(t+\pi)
	\end{equation}
	for all $\,t\in\US^{1}$ (intertwining reflection through 
	$\,\nv c\,$ with the antipodal map on $\,\US^{1}$) we say that $\,\a\,$ is 
	\tit{diameter-central}.
		
	diameter-central loops are obviously central, but the converse is false, as shown by the 
	central figure-8 in \hyperref[fig:egO8]{Figure \ref{fig:egO8}}. Careful
	consideration of that picture reveals that the figure-8 is not diameter-central. 
\eob
\end{definition}

\goodbreak
\begin{prop}\label{prop:oo0}
	Suppose $\,\a\colon\US^{1}\to\R^{2}$ is a clean, central $\,C^{2}\,$ loop. Then either
	\begin{itemize}
	
		\item[a)]
		the symmetry preserves orientation, in which case $\,\a\,$\smallskip
		\begin{itemize}
			\item
			is regularly homotopic to $\,e^{(2k+1)\th}\,$ for some $\,k\in\Z$\smallskip
			
			\item
			avoids its center, and\smallskip
			
			\item
			is diameter-central.\smallskip

		\end{itemize}
		
		\item[]
		or\medskip
		\goodbreak
				
		\item[b)]
		the symmetry reverses orientation, in which case $\,\a\,$\smallskip
		\begin{itemize}
			\item
			is regularly homotopic to the figure-8,\smallskip
			
			\item
			has a simple double-point at its center, and\smallskip
			
			\item
			is \tit{not} diameter-central.\smallskip

		\end{itemize}
	\end{itemize}
\end{prop}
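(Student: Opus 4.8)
The plan is to translate central symmetry into a symmetry of the parameter circle and then read off everything from the normal form of that parametric symmetry. Place the center at the origin and take $\alpha$ unit-speed. By \hyperref[cor:repo]{Corollary \ref{cor:repo}} the reflection $x\mapsto-x$ reparametrizes $\alpha$, so there is a diffeomorphism $\phi\colon\US^{1}\to\US^{1}$ with $-\alpha=\alpha\circ\phi$, and $|\phi'|\equiv1$ because the reflection is an isometry. By \hyperref[prop:repo]{Proposition \ref{prop:repo}} the split between cases (a) and (b) is \emph{exactly} whether $\phi$ preserves or reverses orientation, equivalently whether the lifts of $\alpha$ and $-\alpha$ meet. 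The first key step is to show $\phi$ is an involution: composing the defining relation with itself gives $\alpha\circ\phi^{2}=\alpha$, and since a clean loop has only finitely many double-points by \hyperref[lem:clean]{Lemma \ref{lem:clean}}, the map $\phi^{2}$ fixes every parameter lying over a non-double-point; this is a dense set, so $\phi^{2}=\mathrm{id}$ by continuity.

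In case (a), $\phi$ is an orientation-preserving involution; I would first observe that a fixed point would force $\phi=\mathrm{id}$ (an orientation-preserving involution of an interval is the identity), which is impossible, so $\phi$ is fixed-point-free and hence smoothly conjugate to the antipodal map. Reparametrizing accordingly yields $\alpha(t+\pi)=-\alpha(t)$, i.e.\ $\alpha$ is diameter-central in the sense of \hyperref[def:HLC]{Definition \ref{def:HLC}}. Differentiating gives $\dot\alpha(t+\pi)=-\dot\alpha(t)$; writing the unit tangent as $e^{\i\psi(t)}$ forces $\psi(t+\pi)=\psi(t)+\pi+2\pi m$, whence the total turning $\tfrac{1}{2\pi}\bigl(\psi(2\pi)-\psi(0)\bigr)=1+2m$ is odd, and Whitney--Graustein gives regular homotopy to $e^{(2k+1)\th}$. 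Center-avoidance is then immediate from cleanness: if $\alpha(t_{0})=0$ then $\alpha(t_{0}+\pi)=0$ with $\dot\alpha(t_{0}+\pi)=-\dot\alpha(t_{0})$, a tangential (non-clean) double-point, contradicting \hyperref[def:cdp]{Definition \ref{def:cdp}}.

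In case (b), $\phi$ is an orientation-reversing involution, so it has exactly two fixed points $t_{1},t_{2}$ and is conjugate to $t\mapsto-t$. Each fixed point satisfies $\alpha(t_{i})=-\alpha(t_{i})$, placing $\alpha$ at the center there. To see the center is a \emph{simple} double-point, note that any preimage of $0$ not fixed by $\phi$ is paired with a distinct preimage $\phi(t)$, and $\dot\alpha(\phi(t))\,\phi'(t)=-\dot\alpha(t)$ makes their tangents parallel, violating cleanness; hence $t_{1},t_{2}$ are the only preimages of $0$, and cleanness makes the crossing transversal. For the homotopy class, the conjugacy makes $\alpha$ odd and its unit tangent even, so the tangent indicatrix traces a path on $[0,\pi]$ and retraces it on $[\pi,2\pi]$, giving degree $0$ and regular homotopy to the figure-8. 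Finally, diameter-centrality would realize the symmetry by the orientation-\emph{preserving} antipodal reparametrization and thus (via \hyperref[obs:lift]{Observation \ref{obs:lift}}) force the lifts to meet; since whether the lifts meet is unaffected by reparametrization, this would put us in case (a), so $\alpha$ is not diameter-central.

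I expect the main obstacle to be the two steps that pin down $\phi$: proving $\phi^{2}=\mathrm{id}$ and classifying the fixed-point structure in each orientation class, since every downstream conclusion (diameter-centrality, the parity of the turning number, center-avoidance, and the simple double-point) follows mechanically from the resulting normal form together with cleanness. Two subtleties deserve care. First, conjugating $\phi$ to its normal form may itself reverse orientation and flip the sign of the turning number; this is harmless because both ``odd'' and ``zero'' are invariant under sign reversal, and $e^{-(2k+1)\th}$ is again of the form $e^{(2k'+1)\th}$. Second, the argument leans repeatedly on cleanness to upgrade ``parallel tangents at a coincidence'' into a contradiction, so I would be careful to invoke it only for genuine pairs of distinct preimages and never for a $\phi$-fixed parameter.
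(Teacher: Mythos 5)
Your proof is correct and follows essentially the same route as the paper: both reduce the central symmetry to a reparametrizing diffeomorphism $\phi$ with $-\a=\a\circ\phi$, pin $\phi$ down to $t\mapsto t+\pi$ or $t\mapsto -t$, and then extract each bullet from cleanness and the turning-number formula. The only variations are minor --- you obtain the normal form via $\phi^{2}=\mathrm{id}$ (using finiteness of double-points) where the paper classifies $\phi$ directly as an isometry of $\US^{1}$ and rules out rotations by $l\ne\pi$ with a cleanness argument, you compute the rotation index with an angle function rather than the curvature integral, and you supply an explicit lift-based argument for the ``not diameter-central'' bullet of (b), which the paper leaves implicit.
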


\begin{proof}
	We can assume $\,\a\,$ is centered at the origin $\,\nv 0$, and (after a homothety
	giving it length $\,2\pi\,$) has unit speed.
	\hyperref[cor:repo]{Corollary \ref{cor:repo}} then gives $\,-\a = \a\circ\phi\,$ 
	for some diffeomorphism  $\,\phi\colon\US^{1}\to\US^{1}$. By the chain rule, our 
	unit speed assumption forces $\,|\phi'|\equiv 1$, making $\,\phi\,$ an 
	\tit{isometry} of $\,\US^{1}$. An isometry either {rotates} $\,\US^{1}$ or {reflects 
	it across a diameter}, preserving or reversing orientation respectively.
	
	When we view $\,\US^{1}\approx\R/2\pi\,$ as an additive group, rotation takes the form 
	$\,\phi(t)= t+l\,$ for some $\,l\in\US^{1}$. So if the symmetry preserves orientation, 
	we get $\,-\a(t)=\a(t+l)\,$ for all $\,t$. Since $\,\a\,$
	is not constant, we may assume $\,0<|l|\le\pi$. Iterating the symmetry then gives
	$\,\a(t+2l)=\a(t)$, and hence $\,\dot\a(t+2l)=\dot\a(t)$. Having
	clean double points, however, obstructs this pair of identities for any $\,0<|l|<\pi$. 
	So in the orientation-preserving case, we must have $\,|l|=\pi$, which 
	makes $\,\a\,$ diameter-central, as conclusion (a) asserts. 
	
	A diameter-central loop has parallel tangent lines at $\,\a(t+\pi)\,$ and $\,\a(t)$, as 
	follows from differentiating (\ref{eqn:dc}). Since we assume clean double-points, 
	this forces $\,\a(t+\pi)\ne\a(t)\,$ for all $\,t\in\US^{1}$. But we just saw that 
	$\,\a(t+\pi)=-\a(t)\,$ for all $\,t\in\US^{1}$. So in the orientation-preserving case, 
	our loop must avoid the origin---its center---as claimed by (a).
	
	In the orientation-\tit{reversing} case, by contrast, $\,\a\,$ is reparametrized by an
	isometry $\,\phi\colon\US^{1}\to\US^{1}$ that reflects across a diameter, fixing two 
	antipodal points that we can assume, after a rotation, to be $\,t=0\,$ and $\,t=\pi$. 
	In this case, for all $\,t\in\US^{1}$, we have $\,\phi(t)=-t$, and thus\smallskip
	\begin{equation}\label{eqn:8c}
		-\a(t)=\a(-t)
	\end{equation}
	A central symmetry fixes \tit{only} its center, however, forcing $\,\a\,$ to
	map both $\,t=0\,$ and $\,t=\pi\,$ to the origin. In fact, the origin must be a 
	\tit{simple} double-point, as (b) claims. For, any central loop has parallel tangent 
	lines at the ends of diameters, and when (\ref{eqn:8c}) holds, that means parallel 
	tangent lines at $\,\a(t)\,$ and $\,\a(-t)\,$ for every $\,t\in\US^{1}$. If we had 
	$\,\a(t)=\a(-t)\,$ for some $\,t\,$ \tit{not} fixed by $\,\phi$, we would breach our 
	clean double-points assumption.
	
	It remains to verify the claims about regular homotopy. As is well-known, (e.g.,
	\cite{whitney} or \cite[Proposition 2.1.6]{kl})
	the regular homotopy class of an immersed plane loop $\,\a\colon\US^{1}\to\R^{2}$ is 
	classified by its \tit{rotation index}---the degree $\,\w_{\a}\in\Z\,$ of its unit 
	tangent map $\,\a'/|\a'|\colon\US^{1}\to\US^{1}$, which we may compute by 
	integrating the geodesic curvature \hyperref[eqn:kg]{(\ref{eqn:kg})} along 
	$\,\a$:\smallskip
	\begin{equation}\label{eqn:tnum}
		\w_{\a}=\frac{1}{2\pi}\int_{0}^{2\pi}\k_{g}(t)\ dt,
	\end{equation}
			
	Consider first the orientation-preserving case. There, as we have seen, $\,\a\,$ is 
	diameter-central: $\,\a(t+\pi)=-\a(t)\,$ for all $\,t$. It follows trivially
	that velocity and acceleration change sign too when we rotate the input by $\,\pi$. 
	As easily seen from formula \hyperref[eqn:kg]{(\ref{eqn:kg})}, however, this makes 
	$\,\k_{g}\,$ \tit{even} on the circle: $\,\k_{g}(t+\pi)=\k_{g}(t)$. So when orientation 
	is preserved, the total signed curvature of $\,\a\,$ is twice that along the arc 
	$\,\a(0,\pi)$. At the same time, we have $\,\dot\a(\pi)=-\dot\a(0)$, forcing the unit 
	tangent $\,\dot\a/|\dot\a|\,$ to traverse an odd number of semicircles as $\,t\,$ varies 
	from $\,0\,$ to $\,\pi$. So\smallskip
	\[
		\int_{0}^{2\pi}\k_{g}(t)\ dt
		=
		2\int_{0}^{\pi}\k_{g}(t)\ dt 
		= 2(2k+1)\pi\quad\text{for some $\,k\in\Z$}.
	\]
	
	By (\ref{eqn:tnum}), we then have $\,\w_{\a}=2k+1$, an odd integer, as claimed.

	In the orientation-\tit{reversing} case, identity (\ref{eqn:8c}) replaces the 
	diameter-central condition above. Differentiate that identity twice and use 
	(\ref{eqn:tnum}) to see that $\,\k\,$ is now an \tit{odd} function on the circle:
	\[
		\kappa(-t)=-\kappa(t)\,.
	\]
	The integral of an odd function vanishes, so (\ref{eqn:tnum}) now yields $\,\w_{\a}=0$, 
	making $\,\a\,$ regularly homotopic to the figure-8, as stated. This completes our 
	proof.
\end{proof}

\begin{cor}
	A clean $\,C^{2}\,$ plane loop with \emph{even, non-zero} rotation index \emph{cannot} 
	have central symmetry.
\end{cor}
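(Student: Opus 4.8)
The plan is to read off this statement as the immediate contrapositive of the dichotomy established in Proposition \ref{prop:oo0}. Suppose, toward a contradiction, that some clean $C^2$ plane loop $\,\alpha\colon\US^1\to\R^2\,$ enjoys central symmetry yet has even, non-zero rotation index $\,\omega_\alpha$. Because $\,\alpha\,$ is clean and central, Corollary \ref{cor:repo} guarantees that its central symmetry reparametrizes it, and that reparametrization either preserves or reverses orientation. Proposition \ref{prop:oo0} therefore applies, placing $\,\alpha\,$ squarely in case (a) or case (b).

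In the orientation-preserving case, part (a) of the Proposition forces $\,\omega_\alpha=2k+1\,$ for some $\,k\in\Z$, an \emph{odd} integer. In the orientation-reversing case, part (b) makes $\,\alpha\,$ regularly homotopic to the figure-8, whence $\,\omega_\alpha=0$. Thus the rotation index of any clean, central loop must be either odd or zero. An even, non-zero integer is neither odd nor zero, contradicting both alternatives; hence no such loop can be central.

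I expect no genuine obstacle here, since all the substance has already been extracted in Proposition \ref{prop:oo0}: that result splits the behavior of a clean central loop into exactly the two index possibilities---an odd value $\,2k+1$, or the single even value $\,0$---which together leave no room for the even, non-zero values $\,\pm2,\pm4,\dots\,$. The only ``step'' is to observe that these excluded values fall precisely into the gap between the two admissible cases. If one prefers a direct (rather than contradictory) phrasing, one simply notes that Corollary \ref{cor:repo} and Proposition \ref{prop:oo0} together pin $\,\omega_\alpha\,$ to $\,2k+1\,$ or $\,0\,$ according to the orientation behavior of the symmetry, and that neither option is ever even and non-zero.
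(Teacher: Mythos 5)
Your argument is correct and is exactly the intended one: the paper leaves this corollary unproved precisely because it is the immediate consequence of Proposition \ref{prop:oo0}, which pins the rotation index of a clean central loop to an odd integer (case (a)) or to zero (case (b)), leaving no room for an even, non-zero value.
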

\bigskip

\section{Main Result}\label{sec:MR}
Supposing $\,M^{n}\,$ is a smooth manifold, we now take up our motivating question: 
\tit{What can we say about a complete, proper immersion $\,F\:M^{n}\to\R^{n+1}$ when
$\,F(M)\,$  has central intersections with an open set of hyperplanes?}

To address this, we introduce some notation.
We write $\,u_{p}^{\bot}\,$ for the hyperplane containing $\,p\in\R^{n+1}$ and normal to
$\,u\in\US^{n}$. When $\,p\,$ is the origin, we simply write $\,u^{\bot}$.
These hyperplanes are, respectively, zero sets of the affine functions $\,u_{p}^{*}\,$ 
and $\,u^{*}\,$ given by\smallskip
\[
	u_{p}^{*}(x)=u\cdot (x-p),\quad u^{*}(x) = u\cdot x
\]
When using this notation, we always assume $u\,$ to be a \tit{unit} vector. 
We denote the angular distance between unit vectors $\,u, v\in\US^{n}\,$ by 
$\phi(u,v):=\arccos(u\cdot v)$.

When $\,a>0\,$ and $\,P=u_{p}^{\bot}$, we write $\,P_{a}\,$ for the $a$-neighborhood of 
the hyperplane $\,P\,$:\begin{equation}\label{eqn:Pa}
	P_{a}:=\l\{q\in\R^{n+1}\: \l|u_{p}^{*}(q)\r|<a\r\}
\end{equation}

We call $\,\nu\in\US^{n}\,$  a \tit{unit normal} to an immersion $\,F\:M^{n}\to\R^{n+1}\,$
at a point $\,x\in M\,$ if $\,\nu\,$ is orthogonal to the hyperplane $\,dF(T_{x}M)\,$ in $\,\R^{n+1}$.

We can then say that $\,F\,$ has \tit{general position} if, whenever
$\,y\in\R^{n+1}\,$ and $\,\nu_{1}, \nu_{2},\dots, \nu_{k}\,$ are unit normals 
to $\,F\,$ at distinct points in $\,F^{-1}(y)$, we have\smallskip
\begin{equation}\label{eqn:gp}
	\nu_{1}\wedge\nu_{2}\wedge\cdots\wedge\nu_{k}\ne 0
\end{equation}

If this holds when we extend $\,F\,$ to $\,M\cup P\,$ for some hyperplane $\,P\subset\R^{n+1}$
via the inclusion map on $\,P$, we say that \tit{$\,F\,$ and $\,P\,$ are in general position}.
Note that in this case, the restriction of $\,F\,$ to $\,M\,$ must itself have general position.

When \hyperref[eqn:gp]{(\ref{eqn:gp})} holds for $\,k=2\,$ (i.e., whenever $\,\nu_{1},\nu_{2}\,$
are unit normals to $\,F\,$ at distinct points of $\,F^{-1}(y)\,$), we get weaker 
conditions that we respectively express by saying \tit{$\,F\,$ has transverse self-intersections}, or 
\tit{$\,P\,$ meets $\,F\,$ transversally}.

Transversality alone makes $\,F^{-1}(P)\,$ an embedded
hypersurface in $\,M$ \cite[p.22]{hirsch}. General position guarantees more:
when $\,n=2$, for instance, it is not hard to see that it makes all double-points of 
$\,P\cap F(M)\,$ \tit{clean} as specified in \hyperref[def:cdp]{Definition \ref{def:cdp}}.

We want to focus on the case where $\,F\,$ and $\,P\,$ have general position and the 
\tit{compact} components of $\,F^{-1}(P)\,$ map to sets with central symmetry. Two definitions 
will help:

\begin{definition}[Cross-cut]\label{def:cc}
	When a hyperplane $\,P\subset\R^{n+1}$ meets an immersion $\,F\:M^{n}\to\R^{n+1}$ 
	transversally, a \tit{cross-cut of $\,F\,$
	relative to $\,P\,$} is a {compact component} $\,\G\subset F^{-1}(P)$. We also call 
	its image $\,F(\G)\,$ a cross-cut; context will signal which meaning applies.
	
	We call $\,\G\,$ a \tit{clean} cross-cut when $\,P\,$ and $\,F\,$ are in general 
	position.
\eob
\end{definition}

The transversality assumption in \hyperref[def:cc]{Definition \ref{def:cc}} ensures that 
the tangential projection $\,u\mapsto u-(u\cdot\nu)\nu\,$ yields a \tit{non-vanishing}
transverse vectorfield along $\,\G\,$ (the choice of unit normal $\,\nu\,$ to $\,F\,$ is obviously 
irrelevant here). Cross-cuts are thus \tit{orientable} in $\,M\,$. A routine differential 
topology exercise then yields the existence of what we shall call a \tit{good tubular 
coordinate neighborhood} $\,U\,$ of a cross-cut $\,\G$. This is a neighborhood that $\,F\,$ 
maps to a tube foliated by cross-cuts diffeomorphic to $\,\G$, each a level set of the height 
function $\,u_{p}^{*}\,$.

\begin{definition}[Good tubular patch]\label{def:gt}
	Suppose $\,\G\subset M\,$ is a cross-cut of $\,F\,$ relative to a hyperplane 
	$\,P=u_{p}^{\bot}$. By a \tit{good tubular coordinate neighborhood}
	(or \tit{good tubular patch}) for $\,\G$, we mean a pair $\,(U,\psi)$, where $\,U\subset M\,$
	is the image of an embedding $\,\psi\:\G\times[-a,a]\to M\,$ for some $\,a>0$, 
	and $\,\psi\,$ has these three properties for all $\,(\th,h)\in\G\times[-a,a]\,$:
	
	\begin{itemize}
		
		\item[a)]
		$\psi(x,0)=x\,$ for all $\,x\in\G\,$\medskip
		
		\item[b)]
		$\l(u_{p}^{*}\circ F\circ \psi\r)(\th,h) = h\,$ and \medskip
		
		\item[c)]
		$d\l(u_{p}^{*}\circ F\circ \psi\r) \ne 0\,$
	\end{itemize}
	
	Property (b) means that for each $\,h\in[-a,a]$, the composition $\,F\circ\psi\,$
	maps $\,\G\times\{h\}\,$ into the plane $\,u_{p}^{*}\equiv h\,$.
	Property (c) makes $\,F\,$ transverse to these same planes, so that
	$\,\psi(\G\times\{h\})\,$ is a cross-cut of $\,F\,$ for each $\,h\in[-a,a]\,$. 
\eob
\end{definition}

As mentioned above, the \tit{existence} of a good tubular neighborhood of a cross-cut
is easy to establish. When a cross-cut is \tit{clean}, we can guarantee that nearby 
cross-cuts are likewise clean: 

\begin{lem}\label{lem:gt}
	Suppose $\,\G\subset M\,$ is a clean cross-cut relative to $\,P=u_{p}^{\bot}$, and 
	$\,(U,\psi)\,$ is a good tubular patch for $\,\G\,$. Then there is an $\,\eps>0\,$ 
	for which $\,|q-p|<\eps\,$ and $\,\phi(v,u)<\eps\,$ together ensure that 
	$\,F^{-1}(v_{q}^{\bot})\cap U\,$ is again a clean cross-cut, and is regularly homotopic 
	to $\,\G\,$.
\end{lem}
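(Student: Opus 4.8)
The plan is to separate the topological claim (the regular homotopy) from the metric one (cleanness), working throughout in the good tubular coordinates $\psi\co\G\times[-a,a]\to U$ of Definition~\ref{def:gt}. Write $g_{v,q}:=v_{q}^{*}\circ F\circ\psi$, so that $\G_{v,q}:=F^{-1}(v_{q}^{\bot})\cap U$ is cut out by $g_{v,q}=0$; by properties (b),(c) of that definition, $g_{u,p}(\th,h)=h$ and $\partial g_{u,p}/\partial h\equiv1$. Since $(v,q)\mapsto g_{v,q}$ is continuous into $C^{1}(\G\times[-a,a])$ and the latter is compact, I would pick $\eps>0$ so that $|q-p|<\eps$ and $\phi(v,u)<\eps$ force $\partial g_{v,q}/\partial h>1/2$ throughout, and keep the sign of $g_{v,q}(\cdot,\pm a)$ equal to that of $g_{u,p}(\cdot,\pm a)=\pm a$. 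Strict monotonicity in $h$ along each fiber, plus this sign change, then produce a unique zero $h=\eta_{v,q}(\th)$, smooth in $\th$ and continuous in $(v,q)$ by the implicit function theorem. Hence $\G_{v,q}$ is the graph of $\eta_{v,q}$, stays inside $U$, and is diffeomorphic to $\G$; the nonvanishing of $\partial g_{v,q}/\partial h$ keeps $v_{q}^{\bot}$ transverse to $F$ along it, so it is genuinely a cross-cut. Carrying the smooth family of plane loops $\th\mapsto F(\psi(\th,\eta_{v,q}(\th)))$ into the fixed plane $u^{\bot}$ by a small rotation (legitimate since $\phi(v,u)$ is tiny) exhibits the regular homotopy to $\G$.

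Cleanness is the crux, and it rests on the observation that a single open inequality controls everything. By Lemma~\ref{lem:clean}, $\G$ has finitely many double-points; general position of $F$ and $P$---inequality~(\ref{eqn:gp}) with the plane's normal adjoined---forbids three sheets on $P$ (four vectors in $\R^{3}$ always wedge to zero), so each double-point has exactly two preimages $x_{1},x_{2}$, with unit normals satisfying $u\wedge\nu_{1}\wedge\nu_{2}\ne0$. This inequality does double duty. First, the two branches of the plane loop $F(\G)$ through the double-point run in the directions $\nu_{1}\times u$ and $\nu_{2}\times u$, which span distinct lines exactly when $u\wedge\nu_{1}\wedge\nu_{2}\ne0$: that is precisely the cleanness of the double-point in the sense of Definition~\ref{def:cdp}. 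Second, the upstairs self-intersection set $W:=\{(y_{1},y_{2})\in U\times U : y_{1}\ne y_{2},\ F(y_{1})=F(y_{2})\}$ is, near $(x_{1},x_{2})$, a smooth curve on which the restriction of $(y_{1},y_{2})\mapsto h(y_{1})$, with $h=u_{p}^{*}\circ F$, has nonzero differential, equal up to scale to $u\cdot(\nu_{1}\times\nu_{2})=\det(u,\nu_{1},\nu_{2})\ne0$. So $0$ is a regular value of height along $W$ and the double-points of $\G$ are nondegenerate.

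Now I would perturb. If $(y_{1},y_{2})\in W$ then $F(y_{1})=F(y_{2})$, whence $g_{v,q}(y_{1})=v\cdot(F(y_{1})-q)=g_{v,q}(y_{2})$ automatically; thus the double-points of $\G_{v,q}$ are exactly the points of $W$ at which $v_{q}^{*}(F(y_{1}))=0$. As this restricted function is $C^{1}$-close to $h|_{W}$, persistence of nondegenerate zeros yields, for $\eps$ small, exactly one double-point of $\G_{v,q}$ near each double-point of $\G$, moving continuously with $(v,q)$ and carrying data $\nu_{i}(v,q)\to\nu_{i}$, $v\to u$. The open condition $v\wedge\nu_{1}(v,q)\wedge\nu_{2}(v,q)\ne0$ therefore survives, so these double-points are clean.

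The step I expect to be the real obstacle is ruling out stray double-points of $\G_{v,q}$ that sit near no double-point of $\G$, and this is where compactness must enter. Observation~\ref{obs:kbar} bounds the preimages of any point of $F(\G)$ apart by $\pi/\bar\k$, so coincidences arise only from parameter pairs that are not close; on the compact set of such pairs in $\G\times\G$, minus small neighborhoods of the finitely many (transverse, hence isolated) double-point pairs, the separation $|F(\psi(\th_{1},0))-F(\psi(\th_{2},0))|$ is bounded below by some $c>0$. Since $\G_{v,q}$ is $C^{0}$-close to $\G$, uniform continuity keeps this separation at least $c/2$, forbidding new coincidences there, while the local embeddedness near the diagonal supplied by Lemma~\ref{lem:clean} forbids self-intersections of nearby arcs. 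Hence $\G_{v,q}$ carries only the finitely many clean double-points tracked above, so it is a clean cross-cut regularly homotopic to $\G$, as claimed.
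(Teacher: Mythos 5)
Your argument is correct, but it takes a genuinely different route from the paper's. For existence and the regular homotopy, the paper packages everything into the single map $\mathcal{F}(x,q,v)=\left((F(x)-q)\cdot v,\,q,\,v\right)$, uses property (c) of Definition \ref{def:gt} to see that $d\mathcal{F}$ is surjective along $\G\times\{p\}\times\{u\}$, and then lets the Implicit Function Theorem plus compactness of $\G$ foliate a neighborhood by fibers regularly homotopic to $\G$; your monotonicity-in-$h$ graph construction $h=\eta_{v,q}(\th)$ is an explicit, coordinate-level unwinding of the same mechanism and is equally valid. The real divergence is in the cleanness step. The paper handles it softly: were cleanness to fail along a sequence $(q_{k},v_{k})\to(p,u)$, the continuity of condition (\ref{eqn:gp}), the $C^{1}$ regularity of $F$, and the compactness of $U$ would produce in the limit a violation of cleanness for $\G$ itself. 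You instead run a quantitative perturbation analysis: general position caps each double-point at two sheets, makes the self-intersection locus $W$ a curve on which the height function has $0$ as a regular value, so each double-point of $\G$ persists uniquely and cleanly; a uniform separation bound then excludes stray double-points. Your version is longer and more $\R^{3}$-specific (the wedge count, the cross products), while the paper's argument works verbatim in every dimension; on the other hand, you explicitly supply the one detail the paper's limiting argument leaves tacit---why the witnesses of failure cannot have preimages that collide in the limit---which is exactly the content of your ``stray double-point'' paragraph. One small repair: the uniform local injectivity you invoke near the diagonal is a property of the immersion $F$ on the compact set $U$ (or follows from Observation \ref{obs:kbar} applied to the plane loops $F(\G_{v,q})$ with a uniform curvature bound), not of Lemma \ref{lem:clean}, which concerns a single plane loop; with that citation adjusted, the proof stands.
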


\begin{proof} 
	Define the map\smallskip
	\[
		\mathcal{F}\:U\times\R^{n+1}\times\US^{n}\to\R\times\R^{n+1}\times\US^{n}
	\]
	via\smallskip
	\[
		\mathcal{F}(x,q,v)=\l((F(x)-q)\cdot v,\,q,\,v\r)
	\]
	
	Property (c) in \hyperref[def:gt]{Definition \ref{def:gt}} makes $\,d\mathcal{F}\,$
	surjective at each point of $\,\mathcal{F}^{-1}(0,p,u)=\G\times\{p\}\times\{u\}$, and
	lower-semicontinuity of rank then makes $\,d\mathcal{F}\,$ surjective on some 
	neighborhood of $\,\G\times p\times u\,$. If we denote $\,\eps$-neighbor\-hoods of 
	$\,p\,$ and $\,u\,$ in $\,\R^{n+1}\,$ and $\,\US^{n}$ respectively by $\,B_{\eps}(p)\,$ 
	and $\,B_{\eps}(u)\,$, the Implicit Function Theorem and compactness of $\,\G\,$ then 
	make it straightforward to deduce that for some $\,\eps>0$, the 
	$\mathcal{F}$-preimage of $\,(-\eps,\eps)\times B_{\eps}(p)\times B_{\eps}(u)\,$ is
	foliated by preimages $\,\mathcal{F}^{-1}(h,q,v)$, all regularly homotopic to 
	$\,\mathcal{F}^{-1}(0,p,u)=\G\times\{p\}\times\{u\}$. It follows that 
	$\,\mathcal{F}^{-1}(\{0\}\times B_{\eps}(p)\times B_{\eps}(u))\,$ is likewise 
	foliated. Since\smallskip
	\[
		\mathcal{F}^{-1}(0,q,v)=\l(F^{-1}\l(v_{q}^{\bot}\r)\cap U\r)\times \{q\}\times \{v\}
	\]
	
	this shows that $\,|q-p|<\eps\,$ and $\,\phi(v,u)<\eps\,$ together ensure, 
	for every such $\,q\,$ and $\,v$, that $\,F^{-1}(v_{q}^{\bot}) \cap U\,$ is a cross-cut 
	regularly homotopic to $\,\G\,$.

	Finally, by making $\,\eps>0\,$ smaller still if necessary, we can guarantee that 
	these cross-cuts are all \tit{clean} too. Otherwise, we could find convergent sequences 
	$\,(q_{k})\to p\,$ and $\,(v_{k})\to u\,$ for which each corresponding cross-cut
	$\,(F(x)-v_k)\cdot{q_{k}}\equiv 0\,$ in $\,U\,$ was \tit{not} clean. 
	Condition \hyperref[eqn:gp]{(\ref{eqn:gp})} would then have to fail at some point
	$\,y_{k}\,$ in each of these cross-cuts. Condition \hyperref[eqn:gp]{(\ref{eqn:gp})} 
	is continuous in all variables, however, $\,F\,$ is $\,C^{1}$, and $\,U\,$ is
	compact. Passing to a subsequence, we could then take a limit as $\,k\to\infty\,$ 
	and force a contradiction to our assumption that $\,\G\,$ itself was clean.
\end{proof}

With these purely differential-topological facts in hand, we now turn the case
of interest: where (the images of) all cross-cuts have central symmetry.

\begin{definition}[\cx]
	An immersion $\,F\colon M\to\R^{n+1}\,$ has the \tit{central cross-cut property} 
	(abbreviated \cx) when
	
	\begin{itemize}
		
		\item[a)]
		At least one clean cross-cut exists, and\medskip

		\item[b)]
		The image of every clean cross-cut has \tit{central symmetry}. 
	\end{itemize}
Note that \cx\ is an {affine}-invariant property: if $\,F\,$ has \cx, and 
$\,A\,$ is an affine isomorphism of $\,\R^{n+1}$, then $\,A\circ F\,$ has \cx\ too.
\eob
\end{definition}

In $\,\R^{3}$, circular cylinders and spheres have \cx, and they represent the only
two kinds of examples we know:

\begin{itemize}

	\item[---]\hypertarget{tgt:ccyl}{\tit{Central cylinders:}}
	If an immersion with a cross-cut is preserved by a line of translations \tit{and} 
	by a central reflection, we call it a \tit{central 
	cylinder}. Central cylinders clearly have \cx, since every cross-cut is a
	translate of one through the center.\medskip
	
	\item[---]\hypertarget{tgt:quad}{\tit{Tubular quadrics:}}
	When a non-degenerate {quadric} hypersurface in $\,\R^{n+1}$ is affinely equivalent to 
	a locus of the form
	\[
		x_{1}^{2}+x_{2}^{2}+\cdots x_{n}^{2}\pm x_{n+1}^{2}=c\in\R
	\]
	it will always have compact and transverse, hence ellipsoidal (and thus central)
	cross-cuts. We call these hypersurfaces \tit{tubular quad\-rics}. Note that in 
	$\,\R^{3}$, \tit{all} non-degenerate quadrics are tubular.
\end{itemize}

We suspect these two classes exhaust all possibilities: 

\begin{con}\label{con:gen}
	A complete immersion $\,F\colon M^{n}\to\R^{n+1}\,$ with \cx\ must either be
	a central cylinder, or a tubular quadric.
\end{con}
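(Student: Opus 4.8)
The plan is to promote the curve-level rigidity established in this section to a global statement about $\,F(M)\,$ by studying the \emph{center map}. For each unit direction $\,u\in\US^{n}\,$ and height $\,t\in\R\,$ for which the hyperplane $\,\{u^{*}\equiv t\}\,$ meets $\,F\,$ transversally in a clean compact cross-cut $\,\G_{u,t}$, let $\,c(u,t)\,$ be its center of symmetry. Since the centroid of a centrally symmetric compact set is a fixed point of the symmetry, and a central reflection fixes only its center, we have $\,c(u,t)=\mu\bigl(F(\G_{u,t})\bigr)$ (this is \hyperref[cor:cm]{Corollary \ref{cor:cm}} in the surface case, and follows directly from symmetry-invariance of the centroid when the cross-cut is embedded). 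By \hyperref[lem:gt]{Lemma \ref{lem:gt}} the admissible pairs $\,(u,t)\,$ form an open set on which the cross-cuts vary through clean, regularly homotopic loops, so $\,c\,$ is smooth there and both the regular-homotopy type of $\,\G_{u,t}\,$ and the orientation behavior of its symmetry (per \hyperref[prop:oo0]{Proposition \ref{prop:oo0}}) are locally constant. The conjecture then reduces to pinning down the geometry of $\,c\,$ and reconstructing $\,F(M)\,$ as the union of its cross-cuts.

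First I would analyze $\,c\,$ along a \emph{pencil} of parallel hyperplanes (fixed $\,u$, varying $\,t$). Differentiating the centroid integral of \hyperref[def:cm]{Definition \ref{def:cm}} in $\,t\,$ and using that reflection through $\,c(u,t)\,$ preserves the cross-cut — so that all odd moments about the center vanish — I expect to obtain a second-order relation for $\,t\mapsto c(u,t)$, whose trace is then either an affine line (the signature of a \hyperlink{tgt:ccyl}{central cylinder}) or a curve whose bending is governed by a single quadratic invariant (the signature of a \hyperlink{tgt:quad}{tubular quadric}). The next step is to impose \emph{consistency across directions}: a cross-cut by a plane oblique to $\,u\,$ shares points with the planes of the pencil, and its own center must match the centroid computed from those shared points. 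This over-determines $\,c\,$ and should force the full locus $\,\{c(u,t)\}\,$ to be either a single line or an affine image of a round sphere. In the first case, translation-invariance along that line makes $\,F(M)\,$ a central cylinder; in the second, reassembling $\,F(M)\,$ from its now-ellipsoidal sections recovers a tubular quadric, exactly as in the convex prototypes of \cite{olov}, \cite{sor} and the oval case of \cite{cpo}.

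The main obstacle is precisely what this paper's hypotheses are meant to finesse: \emph{without convexity one loses the support-function machinery}, so the classical Brunn–Blaschke–Olovjanischnikoff scheme — which reduces dimension by re-slicing a central section and inducting — is unavailable, since a hyperplane section of a central cross-cut need not itself be central (witness the cube-and-triangle remark of the introduction). One must instead control the \emph{topological type} of cross-cuts as $\,u\,$ sweeps $\,\US^{n}$: compact components of $\,F^{-1}(P)\,$ can appear, split, or merge, and cleanness can degenerate, so the locally constant data above need not propagate globally. Ruling out these transitions — or showing them compatible with a single line or affine-sphere locus for $\,c\,$ — is where I expect to be stuck. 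This is why I would first settle the tractable anchor, the figure-8 case of \hyperref[thm:8case]{Theorem \ref{thm:8case}}: there every cross-cut is a clean loop of total curvature zero, so \hyperref[prop:oo0]{Proposition \ref{prop:oo0}}(b) forces it to pass through its center as a simple double-point (visited exactly twice), which rigidly anchors the center map and sidesteps the global topological difficulty that blocks the full conjecture.
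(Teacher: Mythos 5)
The statement you are addressing is labeled a \emph{Conjecture} in the paper, and the paper offers no proof of it: it only records that weakened versions were settled elsewhere (hypersurfaces of revolution in \cite{sor}, convex cross-cuts in \cite{cpo}) and then proves the new figure-8 special case, \hyperref[thm:8case]{Theorem \ref{thm:8case}}. So there is no argument in the paper to compare yours against, and your proposal --- which you yourself flag as incomplete --- should be judged as a research sketch rather than a proof. As such it is well calibrated: you correctly identify the figure-8 case as the tractable anchor, for the right reason (\hyperref[prop:oo0]{Proposition \ref{prop:oo0}}(b) pins the center to a simple double-point of the cross-cut, so the center cannot drift as the plane tilts, unlike the sphere example of \hyperref[fig:cmoves]{Figure \ref{fig:cmoves}}), and you correctly locate the obstruction to the general case in the global control of the topological type and cleanness of compact components of $\,F^{-1}(P)\,$ as the slicing hyperplane varies.

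Two concrete gaps in the sketch itself, beyond the one you acknowledge. First, the ``second-order relation'' for $\,t\mapsto c(u,t)\,$ obtained by differentiating the centroid integral is asserted, not derived: vanishing of odd moments about the center is a statement about a single cross-cut, and it does not obviously close up into an ODE for the center curve alone. The paper's own \hyperref[lem:axis]{Local Axis Lemma} shows how delicate it is even to prove the central curve is \emph{straight}, and that argument depends on tilted planes through the moving center cutting cross-cuts that remain central about that same point --- exactly the property that fails in general (\hyperref[fig:cmoves]{Figure \ref{fig:cmoves}}) and that the figure-8 hypothesis exists to supply via \hyperref[lem:cdnm]{Lemma \ref{lem:cdnm}}. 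Your ``consistency across directions'' step, which is supposed to force the center locus to be a line or an affine sphere, is likewise pure assertion and is essentially the whole content of the conjecture. Second, the identification of the center of symmetry with the centroid requires cleanness, not mere centrality: the two-tangent-circles example following \hyperref[def:cm]{Definition \ref{def:cm}} is a central immersed loop whose centroid is not its center, so your parenthetical reduction to ``symmetry-invariance of the centroid'' covers only embedded cross-cuts; for immersed ones you must invoke \hyperref[cor:cm]{Corollary \ref{cor:cm}} and hence carry the general-position hypotheses throughout. Neither gap is fatal to the program, but together with the topological-transition problem you name, they are why the statement remains open.
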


In previous papers, we confirmed weakened versions of this conjecture, proving it
\begin{itemize}

	\item[---]
	for $\,C^{1}\,$ hypersurfaces of revolution ($\,\mathrm{SO}(n)\,$ symmetry) in $\,\R^{n+1}$
	\cite{sor}, and then, using that result,\medskip
	
	\item[---]
	for $\,C^{2}\,$ surfaces in $\,\R^{3}\,$ whose cross-cuts 
	are \tit{convex} as well as central \cite{cpo}.
\end{itemize}

Here we add another case to this list: roughly, that of a complete surface in $\,\R^{3}$ with 
\cx\ \tit{and} for which some clean cross-cut is a figure-8.

To make this precise, we first note that on any complete immersed $\,C^{2}\,$ surface 
with \cx\ in $\,\R^{3}$, every clean cross-cut is a (clean) central $\,C^{2}\,$
plane loop. By \hyperref[prop:oo0]{Proposition \ref{prop:oo0}}, then, each of these
loops is either regularly homotopic to a figure-8, \tit{or} has odd {rotation index}. 

The rotation index of a figure-8 is zero, and here (as sketched in our introduction) we
verify \hyperref[con:gen]{Conjecture \ref{con:gen}} for immersions with figure-8 cross-cuts. 
Since cross-cuts of quadrics can't be figure-8's, such immersions must be cylindrical:

\begin{thm}[Main Result]\label{thm:8case}
	If $\,F\colon M\to\R^{3}\,$ is a complete $C^{2}$ immersion with \cx, 
	and some plane in general position relative to $\,F\,$ cuts it along a clean figure-8, 
	then $F(M)$ is a central cylinder.
\end{thm}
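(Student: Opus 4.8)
The plan is to combine \hyperref[prop:oo0]{Proposition \ref{prop:oo0}} with the persistence of clean cross-cuts from \hyperref[lem:gt]{Lemma \ref{lem:gt}} to locate a distinguished curve on $\,F(M)$---its self-intersection locus---and then exploit the point-reflections centered along that curve to force cylindrical structure. First I would set up and reduce. Since the given clean cross-cut is a figure-8, it has rotation index zero, so \hyperref[prop:oo0]{Proposition \ref{prop:oo0}} places it in case (b): its central symmetry reverses orientation and its center of symmetry is a \emph{simple} double-point, visited exactly twice; by \hyperref[cor:cm]{Corollary \ref{cor:cm}} that center is also the centroid. Call the plane $\,P_{0}=u_{p}^{\bot}$, the cross-cut $\,\G_{0}$, and its center $\,c_{0}$. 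General position makes the self-intersection locus $\,\Sigma\,$ of $\,F\,$ a $\,C^{2}\,$ curve near $\,c_{0}$, with $\,c_{0}\in\Sigma$. By \hyperref[lem:gt]{Lemma \ref{lem:gt}}, every plane sufficiently close to $\,P_{0}\,$ (in both position and normal direction) still cuts $\,F\,$ along a clean cross-cut regularly homotopic to $\,\G_{0}$, hence along a clean central figure-8; applying case (b) again, each such cross-cut has its center at its unique simple double-point, which---being a self-intersection of $\,F$---lies on $\,\Sigma$. Sweeping $\,P_{0}\,$ parallel to itself shows every point of $\,\Sigma\,$ near $\,c_{0}\,$ arises as such a center.

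Next I would make the key observation. Fix a center $\,c\in\Sigma\,$ and let $\,P\,$ be any plane through $\,c\,$ transverse to $\,\Sigma\,$ and close to $\,P_{0}$. The cross-cut in $\,P\,$ is a clean central figure-8 whose double-point must be $\,P\cap\Sigma=\{c\}$; hence its center of symmetry is $\,c$, and the ambient point-reflection $\,R_{c}\colon x\mapsto 2c-x$---which restricts on $\,P\,$ to exactly that central symmetry---maps the whole cross-cut to itself. As $\,P\,$ tilts through the planes on $\,c$, these cross-cuts sweep out a full neighborhood of $\,c\,$ in $\,F(M)\,$ (their arcs through $\,c\,$ realize every tangent direction in each of the two sheets meeting at $\,c$). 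Therefore $\,R_{c}\,$ preserves $\,F(M)\,$ on a neighborhood of $\,c$, for \emph{every} $\,c\in\Sigma$.

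Two consequences follow. Because $\,R_{c}\,$ preserves $\,F(M)\,$ locally it preserves its double-point set, so $\,\Sigma\,$ is invariant under the point-reflection through its own point $\,c$; an arclength computation then forces the curvature of $\,\Sigma\,$ to vanish at $\,c$, and since $\,c\,$ was arbitrary, $\,\Sigma\,$ is a straight line $\,\ell\,$ with direction $\,\tau$. Moreover, for nearby $\,c,c'\in\ell\,$ the composite $\,R_{c'}\circ R_{c}\,$ is the translation $\,T_{2(c'-c)}\,$ along $\,\tau$, and chaining the two local symmetries shows $\,T_{2(c'-c)}\,$ carries $\,F(M)\,$ near $\,c\,$ into $\,F(M)$; letting $\,c'\to c\,$ shows $\,F(M)\,$ actually contains a short $\,\tau$-segment through each of its points near $\,\ell$. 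Thus $\,F(M)\,$ is a cylinder, ruled by $\,\tau$-lines, in a neighborhood of its axis $\,\ell$.

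Finally I would globalize and identify the conclusion. Once the constant field $\,\tau\,$ is known to be tangent to $\,F(M)\,$ everywhere, $\,F(M)\,$ is a cylinder with axis $\,\tau$; its transverse section $\,\G_{0}\,$ is a central figure-8, and the translations along $\,\tau\,$ together with the point-reflection $\,R_{c_{0}}\,$ (which one checks preserves the cylinder) exhibit $\,F(M)\,$ as a central cylinder---the tubular-quadric alternative of \hyperref[con:gen]{Conjecture \ref{con:gen}} being excluded because quadrics have embedded, convex cross-cuts, never figure-8's. \emph{The main obstacle is exactly this last globalization}: upgrading ``$\tau\,$ is tangent near $\,\ell$'' to ``$\tau\,$ is tangent on all of $\,F(M)$''. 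The set where $\,\tau\,$ is tangent is closed and contains a neighborhood of $\,\ell$; the difficulty is its openness, i.e.\ showing the $\,\tau$-rulings extend without the figure-8 cross-cuts degenerating as the cutting plane slides along the axis. This is where completeness of $\,F\,$ is essential: it should guarantee the rulings are complete lines and allow one to continue the local translational symmetry along paths in the connected surface (in the spirit of extending a local isometry of a complete manifold), forcing all of $\,F(M)\,$ to coincide with the cylinder over $\,\G_{0}$.
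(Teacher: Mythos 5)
Your outline shadows the paper's strategy in several places: persistence of clean figure-8 cross-cuts (\hyperref[lem:gt]{Lemma \ref{lem:gt}}), the use of \hyperref[prop:oo0]{Proposition \ref{prop:oo0}}(b) and \hyperref[cor:cm]{Corollary \ref{cor:cm}} to pin the center of every nearby cross-cut to the self-intersection curve, reflections through those centers composing to translations, and an open/closed globalization. But the step on which everything turns --- that the curve of centers is a straight line --- has a genuine gap. You justify it by claiming that the cross-cuts through a fixed center $\,c\,$ ``sweep out a full neighborhood of $\,c\,$ in $\,F(M)$,'' so that $\,R_{c}\,$ preserves $\,F(M)\,$ near $\,c\,$ and hence preserves $\,\Sigma$. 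That sweeping claim is false: the union of the planes $\,v_{c}^{\bot}\,$ with $\,\phi(v,u)<\eps\,$ is only the set of $\,x\,$ satisfying $\,|(x-c)\cdot u|<|x-c|\sin\eps$, a thin conical neighborhood of $\,u_{c}^{\bot}\,$ that omits an entire solid double cone about the direction $\,u$. The central curve has $\,\dot\mu\cdot u=1$, so it generically exits that swept region immediately; consequently $\,R_{c}\,$ is only shown to preserve a bowtie-shaped piece of each sheet, and you learn nothing about $\,R_{c}(\Sigma)$. The ``arclength computation forcing the curvature of $\,\Sigma\,$ to vanish'' therefore has no input, and straightness does not follow. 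This is precisely what the paper's \hyperref[lem:axis]{Local Axis Lemma} supplies, by a different and unavoidable argument: the curves $\,\tau\mapsto 2\mu(h+\tau)-F(\th,s)\,$ (your translations, correctly localized) show only that $\,\dot\mu(h)\,$ is \emph{tangent} to $\,F(U)\,$ at reflected points; if $\,\dot\mu\,$ were non-constant, two independent such tangents would force planarity near the point maximizing $\,|x|^{2}\,$ on a cross-cut, putting a straight segment's interior at a maximum of a strictly convex function --- a contradiction. You need this extremal-point argument (or a substitute); it does not come free from the symmetry bookkeeping.

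Two further points. First, identifying the tilted cross-cut's center with $\,P\cap\Sigma=\{c\}\,$ tacitly assumes the tilted plane meets the double-point set of the tube exactly once; a priori it could meet $\,\Sigma\,$ again elsewhere in the tube, and the paper closes this by continuity of the centroid (via \hyperref[cor:cm]{Corollary \ref{cor:cm}}), which traps the center inside a small ball where $\,\mu(h)\,$ is the only double-point. Second, the globalization you flag as ``the main obstacle'' is left as a sketch; the paper's version is not an abstract continuation-of-local-isometry argument but a concrete one: define the supremum $\,A\,$ of reachable heights of good tubular patches mapping to central cylinders, observe that the boundary loops of a maximal patch are again clean figure-8's, and re-run the local argument there to extend past $\,A$, contradicting maximality. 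As written, your proposal establishes the local symmetry data correctly but does not yet prove either the straightness of the axis or the extension to all of $\,F(M)$.
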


The figure-8 assumption is decisive for the following reason: When a plane $\,P\,$ 
cuts a surface with \cx\ transversally along a figure-8 centered at $\,c\in\R^{3}$, 
and we tilt $\,P\,$ slightly about $\,c\,$ to get nearby cross-cuts, \tit{the 
latter remain centered at $\,c$}. 

Without the figure-8 assumption, this fails. 

Indeed, consider the unit sphere $\,\US^{2}\subset\R^{3}$. It clearly has \cx.
Now take $\,u\in\US^{2}$, $\,0<\lam<1$, and set $\,c:=\lam u\,$. The plane 
$\,u_{c}^{\bot}\,$ will cut $\,\US^{2}\,$ along a circle centered at $\,c$. For any 
$\,v\in\US^{2}\,$ near $\,u$, however, the cross-cut $\,v_{c}^{\bot}\cap\US^{2}\,$ is 
clearly centered on the line spanned by $\,\lam v\,$ (\hyperref[fig:cmoves]
{Figure \ref{fig:cmoves}}). So for $\,v\ne u$, the center moves.

	\begin{figure}[ht]
		\includegraphics[height=48mm]{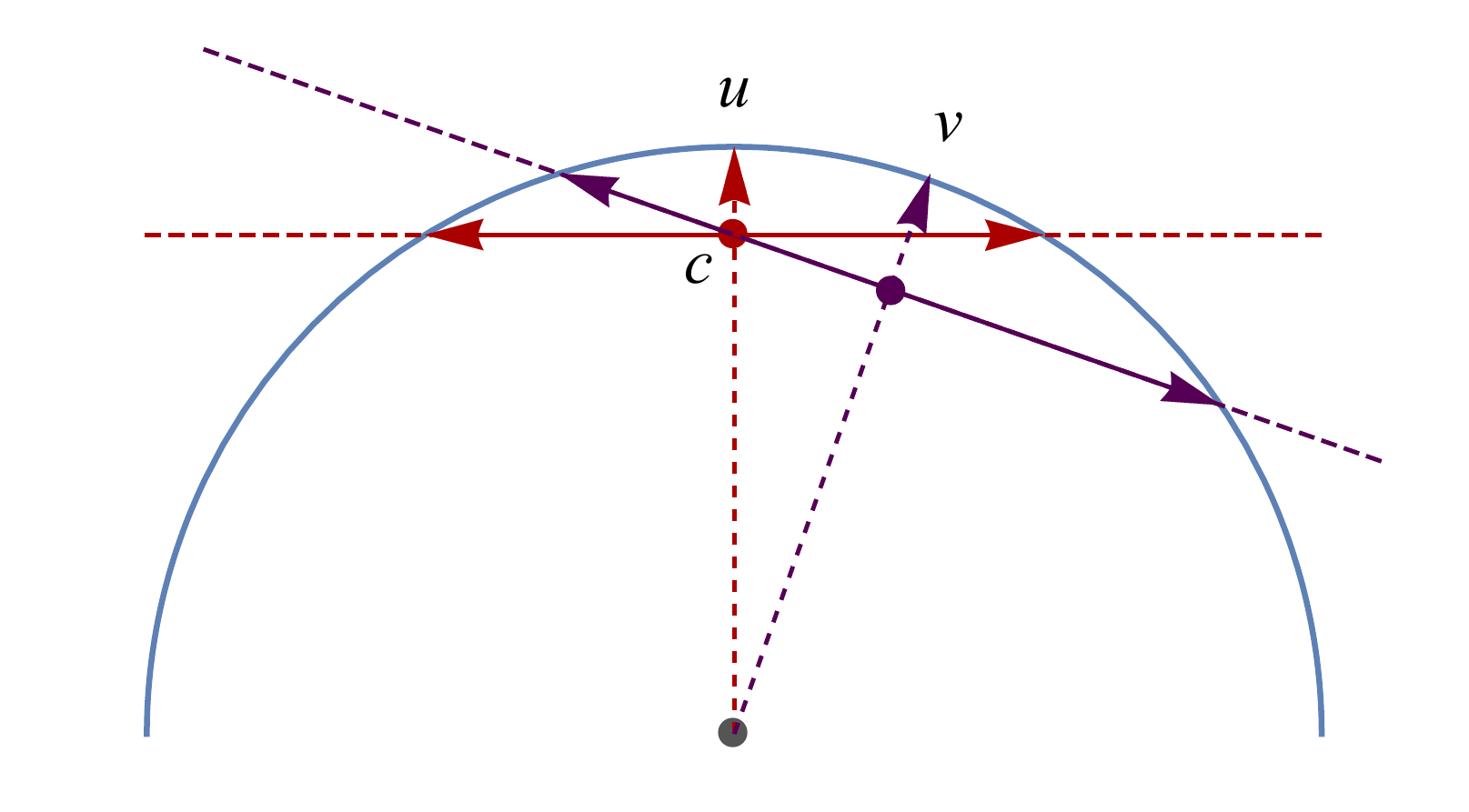}
		\caption{Cross-cuts on a sphere, via $\,u_{c}^{\bot}\,$ and $\,v_{c}^{\bot}$.
		Both hyperplanes \tit{contain} $\,c$, but only one of the cross-cuts 
		(red) is {centered} at $\,c\,$.}
		\label{fig:cmoves}
	\end{figure}

The center \tit{cannot} move in this way when cross-cuts are figure-8's, as we 
make precise in \hyperref[lem:cdnm]{Lemma \ref{lem:cdnm}} shortly below, using the 
notion of \tit{central curve} of a good tubular patch:

\begin{definition}[Central curve]\label{def:ccv}
	Suppose $\,\G\,$ is a cross-cut for an immersion $\,F\:M\to\R^{n+1}\,$ 
	relative to a hyperplane $\,P=u_{p}^{\bot}$. Let $\,(U,\psi)\,$ denote a good tubular patch 
	for $\,\G\,$ as in \hyperref[def:gt]{Definition \ref{def:gt}},
	so that $\,F\,$ maps $\,\psi(\G,h)$ into the hyperplane $\,u_{c}^{*}\equiv h\,$
	for each $\,h\in[-a,a]$. The \tit{central curve} of the patch is the map 
	$\,\c\:[-a,a]\to\R^{n+1}\,$ sending any $\,h\in[-a,a]\,$ to the centroid 
	$\,\c(h)\,$ of $\,F(\psi(\G,h))\,$.
\eob
\end{definition}

When $\,F\,$ is $\,C^{k}$, the central curve of a good tubular patch is
clearly $\,C^{k}$ too. It is also \tit{immersed}, since condition (b) from \hyperref[def:gt]
{Definition \ref{def:gt}} yields $\,u_{p}^{*}(\c(h))=h$, and hence $\,\dot\c(h)\cdot u\ge 1$.

In the Lemma below, we formulate the advantage offered by figure-8 cross-cuts.
Notation is as above: $F\colon M^{2}\to\R^{3}$ is a 
proper $\,C^{2}\,$ immersion, $\,u\in\US^{2}$ and $\,c\in\R^{3}$ are fixed. We have a
clean cross-cut $\,\G\subset F^{-1}(u_{c}^{\bot})$, for which $\,U\subset M\,$ is a good tubular 
neighborhood (\hyperref[def:gt]{Definition \ref{def:gt}}), and $\,\c\:[-a,a]\to\R^{3}$ is
its central curve.

\begin{lem}\label{lem:cdnm} 
	Suppose $\,F\,$ has \cx, $\,F(\G)\,$ is a figure-8, and $\,\eps>0\,$. If 
	$\,\G_{h,v}:=U\cap F^{-1}(v_{\c(h)}^{\bot})\,$ is a clean cross-cut, regularly homotopic to 
	$\,\G\,$ whenever $\,|h|<\eps$ and $\,v\in\US^{2}$ with $\,\phi(v,u)<\eps$, then 
	$\,F(\G_{h,v})$ is a figure-8 with central symmetry about $\,\c(h)\,$ for all such $\,h\,$ and $\,v$.
\end{lem}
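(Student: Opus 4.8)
The plan is to fix $h$ with $|h|<\eps$ and deform the unit normal $v$ across the connected disk $D:=\{v\in\US^{2}:\phi(v,u)<\eps\}$, tracking the center of symmetry of $F(\G_{h,v})$ as a continuous path among the double-points of the cross-cut, and showing that it can never leave the constant double-point $\c(h)$.

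First I would pin down the shape and the candidate center. Since each $\G_{h,v}$ is, by hypothesis, a clean cross-cut regularly homotopic to $\G$, and $F(\G)$ is a figure-8 (rotation index $0$), every $F(\G_{h,v})$ is a clean central loop of rotation index $0$. By \cx\ it is central about some point $c_{h,v}$; since its rotation index is $0$, \hyperref[prop:oo0]{Proposition~\ref{prop:oo0}} places it in case~(b), so $c_{h,v}$ is a \emph{simple double-point} of $F(\G_{h,v})$, and \hyperref[cor:cm]{Corollary~\ref{cor:cm}} identifies $c_{h,v}$ with the centroid of $F(\G_{h,v})$. That centroid depends continuously on $v$, so $v\mapsto c_{h,v}$ is a continuous map $D\to\R^{3}$.

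Next I would treat $v=u$ as a base case. Property~(b) of \hyperref[def:gt]{Definition~\ref{def:gt}} gives $u_{c}^{*}(\c(h))=h$, so $u_{\c(h)}^{\bot}$ is exactly the height-$h$ plane $\{u_{c}^{*}\equiv h\}$; hence $\G_{h,u}=\psi(\G,h)$ and $F(\G_{h,u})=F(\psi(\G,h))$, whose centroid is $\c(h)$ by \hyperref[def:ccv]{Definition~\ref{def:ccv}}. Thus $c_{h,u}=\c(h)$. The point $\c(h)$, being the double-point of this figure-8, is a self-intersection of $F(M)$: it has two preimages $x_{1},x_{2}\in U$ with $F(x_{1})=F(x_{2})=\c(h)$. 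Since \emph{every} plane $v_{\c(h)}^{\bot}$ contains $\c(h)$, these same two preimages lie in $\G_{h,v}=U\cap F^{-1}(v_{\c(h)}^{\bot})$ for every $v\in D$; hence $\c(h)$ remains a double-point of $F(\G_{h,v})$ throughout the deformation.

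The hard part will be the continuity bookkeeping for the double-points, and this is where I expect the real obstacle. Each clean $F(\G_{h,v})$ has only finitely many double-points (\hyperref[lem:clean]{Lemma~\ref{lem:clean}}), and general position \eqref{eqn:gp} forbids any cross-cut point from having three or more preimages, so those double-points are all simple and can never collide (a collision would produce a point with four preimages). Because every cross-cut in $D$ is clean, no double-point can be created or annihilated either, as that would require passing through a tangential, hence unclean, self-intersection. Thus the double-points trace \emph{disjoint} continuous branches over the connected disk $D$ --- one branch being the constant $\c(h)$ --- with continuity of each branch coming from the Implicit Function Theorem applied to the transverse sheet-crossing, and non-collision and non-creation coming from general position together with compactness of $U$ (to supply uniform transversality). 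The continuous center $c_{h,v}$ coincides with one branch at each $v$ and equals the constant branch $\c(h)$ at $v=u$; since the branches are disjoint, the set $\{v\in D:c_{h,v}=\c(h)\}$ is both open and closed, hence all of $D$. Therefore $F(\G_{h,v})$ is a figure-8 centered at $\c(h)$ for every $|h|<\eps$ and every $v\in D$, as claimed.
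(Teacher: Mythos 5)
Your overall architecture matches the paper's: identify the center of each $F(\G_{h,v})$ as a simple double-point via Proposition \ref{prop:oo0}(b), identify it with the continuously varying centroid via Corollary \ref{cor:cm}, observe that $\c(h)$ is a double-point of \emph{every} $F(\G_{h,v})$ (your remark that the two preimages $x_{1},x_{2}$ of $\c(h)$ lie on every plane through $\c(h)$ is a clean way to see this), and then trap the center at $\c(h)$ by a continuity argument in $v$. Your open/closed formulation over the connected disk $D$ is, if anything, tidier than the paper's passage from ``$v$ sufficiently near $u$'' to all $\phi(v,u)<\eps$.

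The gap is in the one collision you actually need to exclude. Openness of $\{v\colon c_{h,v}=\c(h)\}$ fails only if some \emph{other} double-point $p_{n}$ of $F(\G_{h,v_{n}})$ converges to $\c(h)$ as $v_{n}\to v_{0}$, and your stated reason --- ``a collision would produce a point with four preimages'' --- does not apply to that degeneration: the two preimages of $p_{n}$ necessarily converge to $x_{1}$ and $x_{2}$ (since $F^{-1}(\c(h))\cap U=\{x_{1},x_{2}\}$, by property (b) of the good tubular patch), so the limit point $\c(h)$ still has only two preimages and general position is not violated by any preimage count. What actually kills this scenario --- and what the paper's proof supplies --- is local structure: $F$ embeds disjoint neighborhoods $U_{1}\ni x_{1}$ and $U_{2}\ni x_{2}$; general position forces $F(U_{1})\cap F(U_{2})$ to be exactly a segment of the central curve $\c$ inside a small ball $B_{r,h}$ about $\c(h)$; and each plane $v_{\c(h)}^{\bot}$ meets $\c$ transversally, hence only at $\c(h)$ within that ball. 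Together these show $\c(h)$ is the \emph{unique} double-point of $F(\G_{h,v})$ in $B_{r,h}$, which is precisely the disjointness-of-branches statement your argument needs. Your appeal to the Implicit Function Theorem and ``uniform transversality'' points in the right direction, but without isolating the central curve as the local self-intersection locus and checking that the tilted planes meet it only at $\c(h)$, the non-collision claim is unsupported at exactly the branch that matters.
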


\begin{proof}
	\hyperref[lem:gt]{Lemma \ref{lem:gt}} says that for all sufficiently small $\,|h|$, 
	the cross-cut $\,\G_{h,u}\,$ (cut by the plane at signed height $\,h\,$ above 
	$\,u_{c}^{\bot}$) is,
	like $\,\G_{0,u}=\G\,$ itself, clean and regularly homotopic to $\,\G\,$. For simplicity, 
	we can assume this holds for all $\,|h|\le a\,$. (If not, re-define our good tubular patch 
	using a smaller $\,a>0$.)
	
	In this case, $\,F(\G_{h,u})\,$ is a clean figure-8 for every $\,|h|\le a$, and its center, 
	by \hyperref[prop:oo0]{Proposition \ref{prop:oo0}(b)}, is a simple double-point. The
	central curve $\,\c\,$ of the patch thus consists entirely of simple double-points.
	
	In particular, if we fix any $\,h\in(-\eps,\eps)$, then $\,F^{-1}(\c(h))\cap U\,$ is a {pair} 
	$\,\{x_{1},x_{2}\}$, and as an immersion, $\,F\,$ \tit{embeds} disjoint 
	neighborhoods $\,U_{1}\supset x_{1}\,$ and $\,U_{2}\supset x_{2}\,$ in such a way that, 
	in the ball $\,B_{h,r}\,$ centered at $\,\c(h)\,$ with sufficiently small radius
	$\,r>0\,$, we have
	\[
		 F(U)\cap B_{r,h}\subset F(U_{1})\cup F(U_{2})
	\]

	Further, since $\,F\,$ has general position, we can make $\,r>0\,$ small enough to 
	ensure that in $\,B_{r,h}$, the sheets $\,F(U_{1})\,$ and $\,F(U_{2})\,$ meet 
	along a segment of the central curve and nowhere else.
	
	Now, as long as $\,\phi(v,u)<\eps$, the nearby cross-cut $\,\G_{h,v}\,$ is, by assumption,
	another clean cross-cut in $\,U$, regularly homotopic to $\,\G=\G_{0,u}$.  
	Immersion preserves regular homotopy, so for all such $\,v\,$, the nearby images 
	$\,F(\G_{h,v})$, like $\,F(\G)$, are clean figure-8's---and they are central, since 
	$\,F\,$ has \cx. We just need to show they stay centered, like $\,F(\G_{h,u})$, 
	at $\,\c(h)$.
	
	To see that they are, note that the planes $\,v_{\c(h)}^{\bot}\,$ all cut the 
	central curve $\,\c\,$ transversally at $\,\c(h)\,$ since the cross-cuts they 
	form are all clean. So by shrinking $\,r>0\,$ further if needed, we can 
	ensure that in $\,B_{r,h}\,$, each of these planes cuts the central curve \tit{only} at 
	$\,\c(h)\,$. 
	
	It follows that $\,\c(h)\,$ is the \tit{unique} double-point that $\,F(\G_{h,v})\,$ 
	has in $\,B_{r,h}\,$. Since $\,\G_{h,v}\,$ varies smoothly with 
	$\,v$, its centroid---and center of symmetry by \hyperref[cor:cm]{Corollary 
	\ref{cor:cm}}---varies smoothly too. So for $\,v\,$ sufficiently near $\,u$, 
	the center of the figure-8 $\,F(\G_{h,v})\,$ must stay in $\,B_{r,h}$. As seen above, 
	however, that center is a simple double-point, and we have just noted that for every
	$\,v\,$ in question, the \tit{only} double-point of $\,F(\G_{h,v})\,$ in $\,B_{r,h}\,$ is 
	$\,\c(h)$. When $\,\phi(v,u)<\eps$, the center of $\,F(\G_{h,v})\,$ is therefore trapped at 
	$\,\c(h)\,$. As this holds for whenever $\,|h|<\eps$, we have proven the 
	Lemma.
\end{proof}

We will prove our main result (\hyperref[{thm:8case}]{Theorem \ref{thm:8case}}) by
combining this {Lemma} with the \hyperref[lem:axis]{Local Axis Lemma} below, 
which shows that when $\,F\,$ has \cx, and centers of tilted cross-cuts stay (locally)
on the central curve as in the Lemma above, the central curve is locally straight. 
Note that it makes no figure-8 assumption. This lemma quickly produces 
a local version of the main result, namely \hyperref[cor:locCyl]{Corollary \ref{cor:locCyl}}.

As above, we write $\,a>0\,$ and $\,P=u_{c}^{\bot}\,$ for a fixed (but arbitrary)
scalar and plane respectively; $\,P_{a}\,$ denotes the $a$-neighborhood of $\,P\,$. 
We have a clean cross-cut $\,\G\subset F^{-1}(P)\,$, and  a good tubular patch 
$\,\psi\:\G\times[-a,a]\to U\subset M\,$ around $\,\G\,$, so that 
$\,F(\partial U)\subset\bd P_{a}\,$. Without loss of generality, we assume $\,c=\c(0)$, 
the initial value of the central curve $\c\,$ of $\,F(U)\,$.

\begin{lem}[Local axis lemma]\label{lem:axis} 
	Suppose $\,\eps>0\,$, $\,0<b<a\,$ and $\,F^{-1}(v_{\c(t)}^{\bot})\cap U\,$ is a 
	boundaryless clean cross-cut whose image is central about $\,\c(t)$ whenever
	$\,\phi(u,v)<\eps\,$ and $\,|t|<b\,$. Then $\,\c\,$ maps $\,[-b,b\,]\,$ to a line segment.
\end{lem}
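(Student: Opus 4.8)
Let me think about what the Local Axis Lemma is really saying and how to prove it.

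We have a good tubular patch $\psi\colon\G\times[-a,a]\to U$, with central curve $\c$, and $c=\c(0)$. The hypothesis gives us: for every unit $v$ near $u$ (angle $<\eps$) and every $t$ with $|t|<b$, the plane $v_{\c(t)}^\bot$ through $\c(t)$ cuts $F(U)$ in a central loop, centered precisely at $\c(t)$. We want to conclude $\c$ is a straight line segment on $[-b,b]$.

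The crucial structural fact is this: central symmetry about $\c(t)$ means the reflection $x\mapsto 2\c(t)-x$ maps $F(\G_{t,v})$ to itself. So for a point $F(y)$ on the cross-cut, the reflected point $2\c(t)-F(y)$ also lies on the same cross-cut, hence on the same plane $v_{\c(t)}^\bot$ — which is automatic — and on $F(M)$. The key is that this holds for a whole two-parameter family of planes (varying $v$ near $u$) all through the single point $\c(t)$.

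The approach I would take:

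**Step 1 — A fixed point of $F(M)$ reflects to a fixed point, across a family of centers.**
Fix a small $t$ and a unit vector $v$ with $\phi(u,v)<\eps$. Pick a point $q=F(y)\in F(\G_{t,v})$. Its reflection $q^* = 2\c(t)-q$ lies on $F(M)$. Now I would vary $v$: as $v$ sweeps through an open set of directions near $u$, the planes $v_{\c(t)}^\bot$ sweep out all directions through $\c(t)$, so the cross-cuts $\G_{t,v}$ sweep out a full neighborhood of $\c(t)$ in $F(U)$ (minus, possibly, lower-dimensional loci). Each point $q$ near $\c(t)$ in $F(U)$ lies on \emph{some} such cross-cut, and its reflection through $\c(t)$ lands back on $F(U)$. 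Thus \emph{the reflection through $\c(t)$ maps a neighborhood of $\c(t)$ in $F(U)$ into $F(U)$}: locally, $F(U)$ is centrally symmetric about $\c(t)$ as a \emph{surface}, for \emph{every} $t\in(-b,b)$.

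**Step 2 — Extract the straightness of $\c$.**
This is the heart. I now have a one-parameter family of reflections $R_t(x)=2\c(t)-x$, each preserving the germ of $F(U)$ at $\c(t)$. Consider the composite $R_t\circ R_0$, which is the translation $x\mapsto x + 2(\c(t)-\c(0))$. Because $R_0$ maps the germ of $F(U)$ at $\c(0)$ to itself, and $R_t$ maps the germ at $\c(t)$ to itself, the translation $R_t\circ R_0$ carries the germ of $F(U)$ at $\c(0)$ to the germ at $\c(t)$, shifted by $2(\c(t)-\c(0))$. Differentiating in $t$ and evaluating the induced condition on the second fundamental form / the way sheets match should force the displacement $\c(t)-\c(0)$ to depend linearly on $t$. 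Concretely: the two sheets $F(U_1),F(U_2)$ through $\c(t)$ meet transversally along $\c$ (this is the general-position input, as in Lemma \ref{lem:cdnm}), and central symmetry about $\c(t)$ swaps these two sheets. The swap being an \emph{isometry} (a reflection) that fixes $\c(t)$ pins down the tangent plane of each sheet in terms of $\dot\c(t)$; requiring this to be consistent as $t$ varies forces $\ddot\c\parallel\dot\c$, i.e. $\c$ is a reparametrized straight line. Since $u^*(\c(t))=t$ makes $\c$ a graph over the $t$-axis, linear reparametrization of a line through each point with consistent direction yields that $\c([-b,b])$ is a genuine line segment.

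**The main obstacle.** The hard part is Step 2 — converting ``$F(U)$ is locally central about every $\c(t)$'' into ``$\c$ is straight.'' The cleanest route is probably to argue that the reflection group generated by $\{R_t\}$ preserves $F(U)$, and that $R_t\circ R_0$ being a translation \emph{preserving a nonflat surface germ} is extremely rigid: a surface invariant under a continuum of distinct translations must be a cylinder in those translation directions, and the translation directions $\c(t)-\c(0)$ must then be collinear (since $F(U)$ is a surface, not higher-dimensional, it can only be translation-invariant along a \emph{line} of directions). That collinearity is exactly the straightness of $\c$. I would need general position to rule out the degenerate possibility that the two sheets coincide or that $\c(t)-\c(0)$ wanders off the tangent line of $F(U)$ at $\c(0)$; Lemma \ref{lem:cdnm}'s transversality of the two sheets along $\c$ supplies precisely this. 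I expect the delicate point to be making the ``continuum of translations $\Rightarrow$ collinear directions'' step rigorous without assuming more regularity than $C^2$, which is why phrasing it through the second fundamental form (well-defined for $C^2$) rather than through an analytic continuation argument is the safer path.
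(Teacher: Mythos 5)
Your opening move --- reflecting points of tilted cross-cuts through $\,\c(t)\,$ to land back on $\,F(U)\,$, then differentiating in the family of centers --- is the same engine the paper runs on, but both of your steps have genuine gaps. In Step 1 you claim the planes $\,v_{\c(t)}^{\bot}\,$ with $\,\phi(v,u)<\eps\,$ sweep out a full neighborhood of $\,\c(t)\,$ in $\,F(U)$. They do not: a point $\,q\,$ lies on such a plane only if $\,|u\cdot(q-\c(t))|<|q-\c(t)|\sin\eps$, so only a double cone of directions about $\,u^{\bot}\,$ is covered. Since $\,u\cdot\dot\c\equiv1$, the direction of the central curve itself lies in the excluded cone (for the small $\,\eps\,$ the theorem actually supplies), so the points $\,\c(0)\,$ and its neighbors are never reached. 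Consequently you do not know that $\,R_{t}\,$ preserves the germ of $\,F(U)\,$ at $\,\c(0)$, and the composition $\,R_{t}\circ R_{0}=\,$ translation cannot get started. (A smaller error: the central symmetry of a clean figure-8 induces the reparametrization $\,t\mapsto-t$, which \emph{fixes} both preimages of the center, so it preserves each sheet through the double point rather than swapping them; the ``swap'' you hoped would pin down the tangent planes is not available.)

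Step 2 has a second, independent gap even if one grants invariance under a continuum of translations: the inference ``the translation directions must be collinear because $\,F(U)\,$ is only a surface'' is false --- a plane is a surface invariant under a two-parameter family of translations. Ruling out local flatness is exactly where the paper's proof does its real work, and your sketch has no substitute for it. The paper fixes $\,p_{0}=F(\th_{0},\z)\,$ \emph{maximizing} $\,|F(\,\cdot\,,\z)|^{2}\,$ on the cross-cut, shows (via the reflected arcs $\,\tau\mapsto 2\c(h+\tau)-F(\th,s)$) that every tangent plane near $\,p_{0}\,$ contains both $\,\dot\c(\z)\,$ and $\,\dot\c(\z+t)$, and observes that if these were ever independent the surface would be planar near $\,p_{0}$, forcing the cross-cut to contain a straight segment with $\,p_{0}\,$ in its interior --- impossible at an interior maximum of the strictly convex function $\,|x|^{2}$. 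Your ``differentiating \dots\ should force'' and ``requiring this to be consistent \dots\ forces $\,\ddot\c\parallel\dot\c\,$'' are statements of intent standing in for precisely this missing argument.
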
 

\begin{proof} 
	We may identify $\,\G\approx\US^{1}$, and 
	simplify notation accordingly by using coordinates from the domain of our good tubular 
	patch so that, for instance, $\,F(\th,h)\,$ really means $\,F(\psi(\th,h))\,$. 
	
	Fix an arbitrary $\,\z\in(-b,b)$, and choose $\,\th_{0}\in\US^{1}$ so that 
	\tit{$\,p_{0}:=F(\th_{0},\z)\,$ maximizes $\,|F(\th,\z)|^{2}\,$ on 
	$\,F(\G,\z)\,$}:\smallskip
	\[
		\l|p_{0}\r|^{2}=\l|F(\th_{0},\z)\r|^{2}\ge \l|F(\th,\z)\r|^{2}\quad\text{for all $\,\th\in\US^{1}$}
	\]
	
	To prove the Lemma, we will first need to show that $\,(\th_{0},\z)\subset U\,$ has a 
	neighborhood with certain favorable attributes. For that, note that 
	$\,|F(\th,s)-\c(h)|\,$ is 
	continuous on the set of triples $\,(\th,s,h)\in\G\times[-a,a]^{2}$, and that 
	$\,|p_{0}-\c(\z)|=2r\,$ for some $\,r>0\,$. So by making $\,\eta>0\,$ small enough, we 
	can ensure two properties:\smallskip
	\begin{align*}
		\text{i)}&\,\ \l|\z\pm \eta\r|<b,\\
		\text{ii)}&\,\ \l|\th-\th_{0}\r|,\,\l|s-\z\r|,\,|h-\z|<\eta\ \Rightarrow\ 
		\l|F(\th,s)-\c(h)\r|>r
	\end{align*}
	
	Now for any $\,(\th,s,h)\,$ in the $\,\eta$-neighborhood of $\,(\th_{0},\z,\z)\,$ 
	defined by (ii) above, consider the unit vector\smallskip
	\[
		w=w(\th,s,h) :=\frac{F(\th,s)-\c(h)}{\l|F(\th,s)-\c(h)\r|}
	\]
	
	Combining (b) from \hyperref[def:gt]{Definition \ref{def:gt}} with (ii), 
	we then have\smallskip
	\begin{equation}\label{eqn:se}
		\l|u\cdot w\r| = \frac{|s-h|}{\l|F(\th,s)-\c(h)\r|} \le \frac{|s-h|}{r}
	\end{equation}
	
	Subtract the $\,w$-component from $\,u\,$ and normalize to construct 
	a unit vector normal to $\,w$:\smallskip
	\begin{equation}\label{eqn:v}
		 v:= \frac{u-(u\cdot w)w}{\l|u-(u\cdot w)w\r|}
	\end{equation}
	
	By design, the plane $\,v_{\c(h)}^{\bot}\,$ now contains $\,F(\th,s)$.
	We shall want it to cut $\,F(U)\,$ along a central loop, and our hypotheses
	certify that, \tit{if} we can show $\,\phi:=\phi(u,v)<\eps$. To do so, 
	combine (ii) with the triangle inequality to deduce $\,|s-h|<2\eta$, and 
	hence\smallskip
	\begin{equation*}
		\sin^{2}\phi 
		= 1-\cos^{2}\phi 
		= 1-(u\cdot v)^{2}
		=(u\cdot w)^{2}
		\le \l|\frac{s-h}{r}\r|^{2}
		\le \frac{4\eta^{2}}{r^{2}}
	\end{equation*} 
	
	Since $\,\phi<\eps\,$ when $\,\sin\phi<\sin\eps$, this yields the bound we
	seek if we require, along with (i) and (ii) above, that\smallskip
	\[
		\text{iii)}\quad 0<\eta<\frac{r\sin\eps}{2}
	\]
	
	Together, restrictions (i), (ii), and (iii) on $\,\eta>0\,$ now leverage our hypotheses
	to ensure that for $\,v\,$ given by \hyperref[eqn:v]{(\ref{eqn:v})}, the plane 
	$\,v_{\c(h)}^{\bot}\,$ contains both $\,F(\th,s)\,$ and $\,\c(h)\,$, and cuts $\,F(U)\,$ 
	along a loop with central symmetry about $\,\c(h)$.
	
	We can now make the main geometric argument for our lemma.

	Consider the mapping that sends $\,(\th,s,h)\,$
	to the reflection of $\,F(\th,s)\in F(U)\,$ through $\,\c(h)$:\smallskip
	\begin{equation}\label{eqn:G}
		(\th,s,h)\ \longmapsto\ 2\c(h)-F(\th,s)
	\end{equation}
	
	Our hypotheses guarantee that for all small enough $\,|\tau|>0$, 
	\tit{the arc parametrized by}
	\[
		\b(\tau):=2\c(h+\tau)-F(\th,s)
	\]
	\tit{stays in $\,F(U)$}. Trivially, its initial velocity is $\,2\,\dot\c(h)$, which
	cannot vanish because $\,u^{*}(\dot\c(h))=1\,$, by condition (b) from 
	\hyperref[def:gt]{Definition \ref{def:gt}}. This proves:
	
	\tit{If $\,\eta>0\,$ satisfies (i), (ii), and (iii) above, then for all $\,(\th,s,h)\,$
	with $\,|\th-\th_{0}|,|s-\z|,|h-\z|<\eta$, the plane tangent to $\,F(U)\,$ at 
	$\,2\,\c(h)-F(\th,s)\,$ contains $\,\dot\c(h)\ne \nv 0$.}
	
	It follows immediately that whenever $\,|t|<\eta$, each tangent plane to $\,F(U)\,$
	in a neighborhood of $\,p_{0}=F(\th_{0},\z)\,$ contains both $\,\dot\c(\z)\,$ and 
	$\,\dot\c(\z+t)$. From this, we can deduce constancy of $\,\dot\c\,$ near $\,\z\,$.
	
	Indeed, we would otherwise have $\,\dot\c(\z+t)\ne\dot\c(\z)\,$ for some $\,t\in(-\eta,\eta)$,
	and since they have the same $\,u$-component, by (b) from 
	\hyperref[def:gt]{Definition \ref{def:gt}}, inequality means independence.
	Since $\,p_{0}\,$ has a neighborhood in $\,F(U)\,$ where {every} tangent plane contains---hence 
	is spanned by---these same two non-zero vectors, independence forces constancy of
	the unit normal to $\,F(U)\,$ near $\,p_{0}\,$. A neighborhood of $\,p_{0}\,$ in $\,F(U)\,$ 
	then lies in a plane---a plane cutting $\,u_{p_{0}}^{\bot}\,$ along a line. The 
	cross-cut parametrized by $\,F(\,\cdot\,,\z)\,$ must contain a segment of that line, 
	with $\,p_{0}=F(\th_{0},\z)\,$ in its interior. But we maximized $\,|F(\th,\z)|^{2}\,$ 
	at $\,\th_{0}$, and $\,x\mapsto|x|^{2}\,$ is strictly convex; it \tit{cannot} reach 
	a local max on the interior of a segment. We have thus contradicted the possibility that 
	$\,\dot\c(\z+t)\ne\dot\c(\z)\,$ for any $\,|t|<\eta$. It follows that
	$\,\dot\c\equiv \dot\c(\z)\,$ on a neighborhood of $\,\z$.
		
	Because $\,\z\in(-b,b)\,$ was arbitrary, however, this (and continuity of $\,\dot\c$) 
	yields local constancy of $\,\dot\c\,$ on subset of $\,[-b,b]\,$ that is simultaneously 
	non-empty, open, and closed. The conclusion of our Lemma follows at once.
\end{proof}

\begin{cor}[Local cylinder]\label{cor:locCyl}
	Under the assumptions of \hyperref[lem:axis]{Lemma \ref{lem:axis}}, 
	$\,F(U)\,$ is a central cylinder.
\end{cor}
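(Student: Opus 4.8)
The plan is to promote the straightness of the central curve---the conclusion of Lemma~\ref{lem:axis}---into full cylindrical structure by exhibiting a single constant vector tangent to $\,F(U)\,$ at \emph{every} point. Set $\,e:=\dot\c$, which Lemma~\ref{lem:axis} makes constant, and note $\,u\cdot e=1\,$ by differentiating condition (b) of Definition~\ref{def:gt}; thus $\,\c(h)=\c(0)+he$. I claim that every tangent plane of $\,F(U)\,$ contains $\,e$. Granting this, the constant field $\,e\,$ is everywhere tangent to the immersed surface $\,F(U)$, so it pulls back to a nonvanishing field on $\,U\,$ whose flow lines map to straight segments parallel to $\,e\,$ lying in $\,F(U)$. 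Because $\,u\cdot e=1$, each such segment crosses each level $\,u_{p}^{*}\equiv h\,$ exactly once, whence $\,F(\psi(\G,h))=F(\psi(\G,0))+he\,$ and $\,F(U)\,$ is a cylinder over the cross-cut $\,\G_{0}$.

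To prove the tangency claim, I would re-run the local construction from the proof of Lemma~\ref{lem:axis} at an \emph{arbitrary} base point rather than only at the maximizer $\,p_{0}$. Fix $\,\z\in(-b,b)\,$ and $\,\th_{1}\in\US^{1}\,$ with $\,q:=F(\th_{1},\z)\neq\c(\z)$, so that $\,|q-\c(\z)|>0$. Exactly as in that proof, a choice of $\,\eta>0\,$ satisfying the analogues of (i)--(iii) yields, for each small $\,\tau$, a plane $\,v_{\c(\z+\tau)}^{\bot}\,$ with $\,\phi(u,v)<\eps\,$ passing through both $\,q\,$ and $\,\c(\z+\tau)$; by the hypothesis of Lemma~\ref{lem:axis} this plane cuts $\,F(U)\,$ in a loop central about $\,\c(\z+\tau)$, so the reflected arc $\,\tau\mapsto 2\c(\z+\tau)-q\,$ stays in $\,F(U)$. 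Its velocity at $\,\tau=0\,$ is $\,2\dot\c(\z)=2e\neq\nv0$, so $\,e\,$ is tangent to $\,F(U)\,$ at the point $\,2\c(\z)-q$.

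Now observe that reflecting each cross-cut $\,\G_{s}:=F(\psi(\G,s))\,$ through its center of symmetry---which by Corollary~\ref{cor:cm} is its centroid $\,\c(s)$---is an involution of $\,F(U)\,$ onto itself. Hence, as $\,q\,$ ranges over the points of $\,\G_{\z}\,$ distinct from $\,\c(\z)$, the reflected point $\,2\c(\z)-q\,$ ranges over all such points, so $\,e\,$ is tangent at every point of $\,F(U)\,$ off the central curve. By Proposition~\ref{prop:oo0}, each cross-cut either avoids its center (case (a)) or meets it in a single double-point (case (b)); in either case $\,F(U)\cap\c([-a,a])\,$ is at most one-dimensional, so points off the central curve are dense in $\,F(U)$. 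The condition $\,\nu\cdot e=0\,$ (with $\,\nu\,$ the continuous unit normal) is closed, so it holds throughout $\,F(U)$, establishing the tangency claim and hence the cylinder structure $\,\G_{h}=\G_{0}+he$.

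It remains to verify centrality. Using $\,\G_{h}=\G_{0}+he\,$ together with $\,\c(h)=\c(0)+he$, the reflection $\,x\mapsto 2\c(0)-x\,$ carries $\,\G_{h}\,$ to $\,\G_{-h}\,$ (since $\,\G_{0}\,$ is itself central about $\,\c(0)$), and therefore preserves $\,F(U)$; combined with the translations by $\,te\,$ along the axis, this displays $\,F(U)\,$ as a central cylinder. I expect the principal obstacle to be the bookkeeping at the centers: the reflected-arc construction degenerates precisely at the double-points of the cross-cuts, so the tangency argument must be carried out strictly away from $\,\c\,$ and then extended to the central curve by density and continuity of the tangent plane, with cleanness (via Proposition~\ref{prop:oo0}) guaranteeing that the excluded centers are isolated within each cross-cut.
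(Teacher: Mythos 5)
Your proposal is correct and follows essentially the same route as the paper: both reduce the corollary to the claim that every tangent plane of $\,F(U)\,$ contains the constant vector $\,\dot\c$, both establish that claim by observing that the reflected arc $\,t\mapsto 2\c(h+t)-q\,$ stays in $\,F(U)\,$ (because tilted cross-cuts remain central about the axis) and has velocity $\,2\dot\c\ne\nv 0$, and both extend from points off the axis to all of $\,F(U)\,$ by a closedness/density argument. Your extra bookkeeping---swapping the roles of $\,q\,$ and its reflection via the involution, and the explicit centrality check at the end---only makes explicit what the paper leaves terse.
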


\begin{proof} %
	By \hyperref[lem:axis]{Lemma \ref{lem:axis}}, the central curve of $\,F(U)$---what we
	shall henceforth call its \tit{axis}---is a line segment parallel to $\,v:=\dot\c(0)\,$. 
	The Corollary follows easily from one additional
	
	\tit{Claim.} \tit{Every tangent plane to $\,F(U)\,$ contains $\,v\,$.}
	
	As $\,F(U)\,$ is closed, it suffices to prove this for an arbitrary point $\,p\in F(U)\,$ 
	\tit{not} lying on its axis. Let $\,h:=u_{c}^{*}(p)\,$ denote the signed height of 
	$\,p\,$ above $\,u_{c}^{\bot}\,$ ($c=\c(0)$).
	
	The cross-cut of $\,F(U)\,$ parallel to $\,u_{c}^{\bot}\,$ at height $\,h\,$ is
	central about $\,\c(h)$, so both $\,p\,$ and its reflection $\,q:=2\c(h)-p\,$ 
	lie in $\,F(U)\cap u_{\c(h)}^{\bot}$. Like $\,p$, of course, 
	$\,q\,$ avoids the axis of $\,F(U)$. 
	
	Our assumptions say that slightly tilted cross-cuts of $\,F(U)\,$ are \tit{also} 
	central about the axis, and provided $\,|t|>0\,$ is sufficiently small, some such cross-cut
	contains both $\,q\,$ and $\,\c(h+t)\,$.
	
	It follows that the arc $\,t\mapsto 2\mu(h+t)-q\,$ lies in $\,F(U)\,$ for all 
	sufficiently small $\,|t|$. The resulting differentiable arc passes through $\,p\,$ 
	when $\,t=0$, with initial velocity $\,2\dot\c(h)=2v$, so $\,v\,$ is tangent to $\,F(U)\,$ 
	at $\,p$, as our Claim proposes.
	
	The corollary quickly follows:  $F(U)\,$ is everywhere tangent to the constant 
	vectorfield $\,v$, so it is foliated by line segments parallel to $\,v$. This makes it a 
	(generalized) cylinder, and it has a compact central cross-cut, so it is {central} too.
\end{proof}

By chaining together intermediate results from above, we can quickly prove our main theorem.
We restate it here for the reader's convenience. 

\hyperref[thm:8case]{\textbf{Theorem \ref{thm:8case}}}:
	\tit{If $\,F\colon M\to\R^{3}\,$ is a complete $C^{2}$ immersion with \cx, 
	and some plane in general position relative to $\,F\,$ cuts it along a clean figure-8, 
	then $F(M)$ is a central cylinder.}

\begin{proof}[Proof of Main Result]
	We are assuming that for some plane $\,P=u_{c}^{\bot}\,$ in general position 
	relative to $\,F$, a clean cross-cut $\,\G\subset F^{-1}(P)\,$ whose image 
	$\,\g:=F(\G)\subset P\,$ is a clean figure-8. As discussed in connection with 
	\hyperref[def:gt]{Definition \ref{def:gt}}, that puts $\,\G\,$ in the image of a good 
	tubular patch $\,(U,\psi)\,$.
	
	\hyperref[lem:gt]{Lemma \ref{lem:gt}} now provides some $\,0<\eps<a\,$ 
	for which every cross-cut $\,F^{-1}(v_{\c(h)}^{\bot})\cap U\,$ is a clean figure-8 
	when $\,|h|<\eps\,$ and $\,\phi(v,u)<\eps$. As above, $\,\c\:[-a,a]\to\R^{3}\,$ here
	denotes the central curve of $\,F(U)\,$.
	
	\hyperref[lem:cdnm]{Lemma \ref{lem:cdnm}} now certifies that each of these cross-cuts 
	has central symmetry about $\,\c(h)$, and \hyperref[cor:locCyl]{Corollary \ref{cor:locCyl}} 
	(with $\,b=\eps$) then shows that, within the	$\eps$-neighborhood $\,P_{\eps}$ of $\,P$, 
	the image $\,F(U)\,$ is a central cylinder.
	
	A simple open/closed argument now shows that $\,F(M)\,$ is the complete extension of 
	that cylinder.
	
	Indeed, call a scalar $\,a>0\,$ \tit{reachable} if there exists a good tubular patch 
	$\,\psi\:\G\times[-a,a]\to M\,$ whose image is mapped by $\,F\,$ to a central cylinder. 
	Given what we have just proven, we know that\smallskip
	\[
		A:=\sup\l\{a>0\: \text{$a$ is reachable}\r\} \ge\eps>0
	\]
	Our theorem amounts to the assertion $\,A=\infty\,$, which we can now establish by
	contradiction. For if $\,A<\infty$, the completeness and smoothness of $\,F\,$ 
	would let us construct a \tit{maximal} good tubular patch $\,\psi\:\G\times[-A,A]\to 
	M$, with $\,F\circ\psi\,$ mapping $\,\G\times[-A,A]\,$ to a central cylinder with 
	boundary in $\,\bd P_{A}\,$. The two loops bounding this cylinder would clearly be 
	clean figure-8's. By applying the argument above, however, we could deduce that their 
	preimages in $\,M\,$ each have good tubular neighborhoods mapping to central cylinders
	via $\,F\,$. Our supposedly {maximal} good tubular patch could then be extended 
	slightly at each boundary component, violating the maximality of $\,A\,$. 
	Thus, $\,A\,$ \tit{cannot} be finite; we have $\,A=\infty$ which gives our theorem.
\end{proof}




\end{document}